\theoremstyle{plain}
\newtheorem{theorem}{Theorem}[section]                                          
\newtheorem{proposition}[theorem]{Proposition}                          
\newtheorem{lemma}[theorem]{Lemma}
\newtheorem{corollary}[theorem]{Corollary}
\theoremstyle{definition}
\newtheorem{definition}[theorem]{Definition}
\theoremstyle{remark}
\begin{document}

%%%%%%%%%%%%%%%%%%%%%%%%%%%%%%%%%%%%%%%%%%%%%%%%%%%%%%%%%%%%%%%%%%%%%%%%%
%\setcounter{page}{0}%{1}
%\setcounter{section}{0}

\title{Generalized Dirichlet distributions on the ball and moments}%.
\large
\author{F. Barthe\footnote{Equipe de Statistique et Probabilit\'es, 
Institut de Math\'ematiques de Toulouse (IMT) CNRS UMR 5219, 
Universit\'e Paul-Sabatier, 
31062 Toulouse cedex 9, France. e-mail: franck.barthe@math.univ-toulouse.fr}, F. Gamboa\footnote{IMT e-mail: fabrice.gamboa@math.univ-toulouse.fr}, L. Lozada-Chang\footnote{
Facultad de Matem\'atica y Computaci\'on, 
Universidad de la Habana, 
San L\'azaro y L, Vedado
10400 C.Habana, Cuba. e-mail: livang@gmail.com} and A. Rouault\footnote{Universit\'e de Versailles, LMV	
B\^atiment Fermat,  	
45 avenue des Etats-Unis,  	
78035 Versailles cedex, France. e-mail: alain.rouault@math.uvsq.fr	
}}
\setcounter{tocdepth}{3}

\thispagestyle{empty}
\maketitle

\begin{abstract}
  The geometry of unit $N$-dimensional $\ell_{p}$ balls (denoted here by $\mathbb{B}_{N,p}$) has been intensively investigated in the past decades. A particular topic of interest has been the study of the asymptotics of their projections. Apart from their intrinsic interest, such questions have applications in several probabilistic and geometric contexts \cite{Barthe1}. In this paper, our aim is to revisit some known results of this flavour with a new point of view. Roughly speaking, we will endow $\mathbb{B}_{N,p}$ with some kind of Dirichlet distribution that generalizes the uniform one and will follow the method developed in \cite{Skibinsky}, \cite{Chang} in the context of the randomized moment space. The main idea is to build a suitable coordinate change involving independent random variables. Moreover, we will shed light on  connections between the randomized balls and the randomized moment space.

\end{abstract}

%\subjclass[2000]{30E05, 52A20, 60D05, 62H10, 60F10.} 
%keywords{Moment spaces, $\ell_p^n$-balls, canonical moments, Dirichlet distributions, uniform sampling.}

{\bf Keywords:}
Moment spaces, $\ell_p^n$-balls, canonical moments, Dirichlet distributions, uniform sampling. \\
\\
{\bf AMS classification:} 30E05, 52A20, 60D05, 62H10, 60F10.
\normalsize\rm
%\centerline{\today}
\bigskip

\section{Introduction}
\label{sintro}
The starting point of our work is the study of the asymptotic behaviour of the moment spaces:
\begin{equation}
     M_N^{[0,1]} = \left\{\left(\int_0^1 t^j \mu(dt)\right)_{1\leq j\leq N} : \mu \in {\mathcal{M}_1([0,1])}\right\},
     \label{momo1}
\end{equation}
where ${\mathcal{M}_1([0,1])}$ denotes the set of all probability measures on $[0,1]$.   These compact sets are randomized with the uniform distribution. By using cleverly an old Skibinsky's result (\cite{Skibinsky}),  the authors of the seminal paper
\cite{Chang}  show two very nice results. First they proved that,  for large $N$, the sets $(M_N^{[0,1]})$ are {\it almost} concentrated in terms of finite dimensional projections on one point (the moments of the arcsine law). Secondly, they obtained a multidimensional CLT for the fluctuations of the finite dimensional projections. The asymptotic covariance matrices only involve the moments of the arsine law. These results have been extended to large deviations asymptotics in \cite{FabLoz}. The main tool for the study of randomized moment spaces is the existence of a {\it nice} coordinate change  leading to independent random variables. This coordinate change is obtained by the so-called Knothe map (see the proof of the first theorem p.40 in \cite{Knothe}) that is available for any finite dimensional bounded convex body. The new coordinates are called canonical moments in the literature. We refer to the excellent book \cite{DS} for a complete overview on canonical moments.  
Notice that recently these results have been extended to matricial moment spaces (see \cite{DS1}).\\
\ \\
In this paper we will focuss both on randomized $N$-dimensional $\ell_{p}$ balls 
(denoted by $\mathbb{B}_{N,p}$) and randomized moment spaces. First the randomized ball will be studied by using the Knothe map. Surprisingly, as in the case of moment spaces, when the ball is endowed with the uniform distribution, the Knothe map leads to canonical coordinates that are also independent random variables.
Furthermore,  we will show that this property remains true for a general family of distributions on the ball (called $p$-generalized Dirichlet distribution, see Section \ref{susuge}). Independence will be the main tool to investigate the properties of the randomized balls. Indeed, it will enable to easily show general association results for the $p$-generalized Dirichlet distribution (see Section \ref{susuoth}). Furthermore, with the help of the canonical coordinates we will study various  Poincar\'e-Borel like lemmas for these distributions. That is, convergence and fluctuations of the projections when the dimension of the space increases (see Section \ref{sec:asymptotics}).\\
\ \\
There is a revival interest on the moment problem and on orthogonal polynomials on the torus $\mathbb{T}$. It is also possible in this frame to define canonical moments. They are also sometimes called Verblunsky coefficients (see for example \cite{Simon1}). Notice that these coefficients have a lot of properties. For example they are involved in the inductive equations of orthogonal polynomials construction. In this paper, we will discuss  connections between randomized  moment spaces (for moment problems on $\mathbb{T}$), and the ball $\mathbb{B}_{N,p}$.  This connection will be obtained through the canonical coordinates (see Section \ref{ssec:mom_spc_GD}).\\
\ \\
The paper is organized as follows.
In the next section, we recall some definitions and useful properties of Dirichlet distributions. We also recall the definition and some basic properties of the canonical moments on a compact interval. Then, building the same parametrization for $\mathbb{B}_{N,p}$ we introduce and study generalized Dirichlet distributions on $\mathbb{B}_{N,p}$ (called the $p$-generalized Dirichlet distributions).  The uniform distribution appears to be a special case.
We also discuss the connections between these results and the so-called stick-breaking construction of the Dirichlet distributions. We end the section settling negative association properties for these distributions.
In Section~\ref{sec:asymptotics},  we obtain asymptotic results for $p$-generalized Dirichlet distributions. 
Let us notice that there is an extension of generalized Dirichlet distributions in the context of matrix balls (see \cite{Nene}).

We give several applications of the representation on independent variables proving several versions of the Poincar\'e-Borel lemma (see e.g. \cite{Ledoux}) working both with weak convergence and large deviations.
Finally in the last Section, we discuss some connections between randomized balls and randomized moments spaces.

%the moment spaces building  a one to one correspondence between them and the balls.  For this, we first focus on complex moment spaces, beginning with recalling the definition of Verblunsky coefficients (canonical moments). As a matter of fact, the image of the uniform distribution induced by this mapping on a (real or complex) moment space  is a generalized Dirichlet distribution on a ball.

\section{Probabilities for moment sets and balls}

\subsection{The Dirichlet world}
\label{sec.dirichlet.world}

Let us recall some useful properties and definitions related to Dirichlet distributions (see for example \cite{Ko00}).
A large class of laws on the unit ball may be built from the Dirichlet distributions.

\label{direlem}
We use two definitions of simplices. For $k \geq  1$, we set
\begin{eqnarray*}
   \mathcal{S}_{k+1} &=& \{(x_1, \cdots, x_{k+1}) : x_i > 0 , (i = 1, \cdots , k+1) ,  \ x_1 + \cdots + x_{k+1}  = 1 \},\\
   \mathcal{S}_k^< &=& \{(x_1, \cdots, x_k) : x_i > 0 , (i = 1, \cdots , k) ,  \ x_1 + \cdots + x_k  < 1 \}.
\end{eqnarray*}

It is clear that the mapping $(x_1, \cdots , x_{k+1}) \mapsto (x_1, \cdots, x_k)$ is bijection from $\mathcal{S}_{k+1}$ onto  $\mathcal{S}_k^<$.

For $a_{1},\cdots,a_{k+1}>0$, the Dirichlet distribution $\hbox{Dir}(a_1, \cdots, a_{k+1})$ on $\mathcal{S}_{k+1}$
has the density
\begin{equation*}
\label{defdir}
    f(x_1, \cdots, x_{k+1}) = \frac{\Gamma(a_1 + \cdots + a_{k+1})}{\Gamma(a_1) \cdots \Gamma(a_{k+1})} x_1^{a_1 - 1} \cdots x_{k+1}^{a_{k+1} -1},
\end{equation*}
with respect to the Lebesgue measure on $\mathcal{S}_{k+1}$.
It can also be viewed as a distribution Dir$_k(a_1, \cdots, a_k ; a_{k+1})$ on $\mathcal{S}_k^<$ with density
\begin{equation*}
\label{defdir2}
    f^<(x_1, \cdots, x_k) = \frac{\Gamma(a_1 + \cdots + a_{k+1})}{\Gamma(a_1) \cdots \Gamma(a_{k+1})} x_1^{a_1 - 1} \cdots x_k^{a_k -1} \left(1 - x_1 - \cdots -x_k\right)^{a_{k+1}-1}.
\end{equation*}
The particular case $a_1= \cdots =  a_{k+1}=1$ is the uniform distribution on $\mathcal{S}_k^<$.

We recall that the family of Dirichlet distributions is stable by {\it partial sum grouping}, i.e.
if $(\sigma_1 , \cdots , \sigma_m)$ is a partition of $\{1, \cdots, k+1\}$,  then
\begin{equation}
\label{grouping}
    (x_1, \cdots, x_{k+1}) \stackrel{d}{=}\  \hbox{Dir}(a_1, \cdots, a_{k+1}) \Rightarrow (X_1, \cdots, X_m) \stackrel{d}{=}\  \hbox{Dir}(A_1, \cdots, A_{m})
\end{equation}
where $X_i = \sum_{j\in \sigma_i} x_j$ and $A_i = \sum_{j\in \sigma_i} a_j$.
Moreover
\begin{equation}
\label{quot}
   \left(\frac{X_1}{X_1+\cdots +X_{m-1}} , \cdots , \frac{X_{m-1}}{X_1+\cdots +X_{m-1}}\right) \stackrel{d}{=}\
   \hbox{Dir}\left(A_1, \cdots, A_{m-1}\right)
\end{equation}
and this vector is independent of $X_1+\cdots +X_{m-1}$.

If $a_1= \cdots = a_{k+1} = a$, we denote the distribution by $\hbox{Dir}_k (a)$. If $k=2$ the distribution Dir$_1(a ; b)$ on $(0,1)$ is the Beta$(a,b)$ distribution.
%, with density
%$$\frac{\Gamma(a+b)}{\Gamma(a)\Gamma(b)} x^{a-1}(1-x)^{b-1} \BBone_{(0,1)} (x)\,.$$
Sometimes we need the affine push forward  on $(-1, +1)$ of  this distribution. We denote this last distribution by $\mathrm{Beta}_s(a,b)$. 
%with density
%\begin{equation*}
%2^{1-a-b}\frac{\Gamma(a+b)}{\Gamma(a)\Gamma(b)} (1+x)^{a-1}(1-x)^{b-1} \BBone_{(-1,1)} (x),
%\end{equation*}
%which may be obtained by pushing forward the Beta$(a,b)$ distribution by the mapping $x \mapsto 2x-1$. 
To end this preliminary let us recall the classical relation between Gamma and Dirichlet distributions. For $a,\lambda>0$, we say
that $Z\sim\gamma(a,\lambda)$ whenever its distribution has the following
density
$$h_{a,\lambda}(y):=\frac{y^{a-1}\lambda^a}{\Gamma(a)}\exp(-\lambda y),\;\;
(y>0).$$
We use frequently the slight abuse of notation $\gamma(a) = \gamma(a,1)$.

If $y_i$, $i =1,2, \cdots , r$ are independent and if $y_i \stackrel{d}{=}\  \gamma(b_i)$ then
\begin{equation}
\label{10}
\left(\frac{y_1}{y_1+\cdots +y_{r}} , \cdots , \frac{y_{r}}{y_1+\cdots +y_{r}}\right) \stackrel{d}{=}\  \hbox{Dir}\left(b_1 , \cdots , b_{r}\right)
\end{equation}
and this variable is independent of  $y_1+\cdots +y_{r}$.
It is a generalization of the well known fact
\begin{equation*}
%   \label{betagammaalg}
   \mathrm{Beta}(a,b) \stackrel{d}{=}\  \frac{\gamma(a)}{\gamma(a) + \gamma'(b)}\,,
\end{equation*}
where $\gamma(a)$ and $\gamma'(b)$ are independent.\\
Let ${\mathbf G}_p$ be the distribution on $\mathbb{R}$ with density
$$x \mapsto \frac{1}{2\Gamma\left(1 +\frac{1}{p}\right)} \!\ e^{-|x|^p}\,.$$
It is the distribution of $\varepsilon Z^{1/p}$ where $Z$ has the  $\gamma(p^{-1})$ distribution, and $\varepsilon$ is a Rademacher variable ($\mathbb{P}(\varepsilon=\pm 1)=0.5$) independent of $Z$.
\subsection{Stick-breaking and generalized Dirichlet distributions}
\label{subeta}
The following classical model has been widely used in geometric probability,  genetics, Bayesian statistics, etc.... It leads to the fascinating area of random distributions (see Kingman \cite{Kingrdd} and Pitman \cite{Pit}). It is often known as stick-breaking. For the sake of consistency, let us explain the details of the construction.

We define two sets of variables $(Z_1, \cdots, Z_n) \in (0,1)^n$ and $(P_1, \cdots, P_n)\in \mathcal{S}_n^<$ connected by the system of equations
\begin{equation}
   \label{ptoz}
   \begin{aligned}
   Z_1 &= P_1\\
   Z_j &= P_j \left(1 - P_1 - \cdots -P_{j-1}\right)^{-1}\ \ , \ j= 2, \cdots n\,.
   \end{aligned}
\end{equation}
which is equivalent to
\begin{equation}
   \begin{aligned}
   \label{ztop}
   P_1 &= Z_1\\
   P_j &= Z_j \left[\prod_{k=1}^{j-1} (1- Z_k)\right]\ \ , \ j= 2, \cdots n\,.
   \end{aligned}
\end{equation}
We may add $Z_{n+1} = 1$ which is equivalent to $P_{n+1} = 1 - P_1 - \cdots - P_n$.

It is possible to define the infinite model: $(Z_j)_{j \geq 1}$ and $(P_j)_{j \geq 1}$ connected by (\ref{ptoz}) and (\ref{ztop}). In that case, $\sum_j P_j = 1$ is  equivalent to $\prod_j (1-Z_j) = 0$.
Actually,  it can be thought as  a sequential procedure to generate an element of $\mathcal{S}_n$ (or $\mathcal{S}_\infty$) viewed as a partition of $[0,1]$ into segments. The value $P_1 = Z_1$ gives a bisection $[0, P_1]\cup (P_1, 1]$ of $[0,1]$.  To the rightmost segment we perform a new bisection in proportion $Z_2$, so that $(P_1, 1]$ gives $(P_1, P_2]\cup (P_2, 1]$ with $P_2 = Z_2(1-P_1)$, and so on.

They are essentially two ways to provide these variables with probability distribution, starting either  from the $Z$'s or  from the $P$'s. The common feature of all popular randomizations is the independence of the $Z$ variables.
In the elementary model $Z_j$ is uniform on $[0,1]$ for every $j\leq n$. The model was extended successively to $Z_j \stackrel{d}{=}\  \mathrm{Beta}(1, \theta)$ with  $\theta > 0$ (it is the so called GEM$(\theta)$ model), and later to $Z_j \stackrel{d}{=}\  \mathrm{Beta}(1-\alpha, \theta + j\alpha)$ for $\theta > -\alpha$ and $0< \alpha < 1$ (it is the so called GEM$(\alpha, \theta)$ model).  The bibliography in \cite{fengseul} is rather extensive.
Besides, for biological and Bayesian statistical motivations, Connor and Mosimann \cite{connor1969cip}, assumed
\begin{equation*}
\label{oconnor}
Z_j \stackrel{d}{=}\  \mathrm{Beta}(a_j, b_j)\ \ , \ j=1 , \cdots , n
\end{equation*}
with $Z_{n+1} = 1$, and where $a_1, \cdots, a_n, b_1, \cdots, b_n$ are positive numbers.
They noticed (formula 14 p.199) that the density of $P= (P_1, \cdots, P_n)$ on $\mathcal{S}_n^<$ is
\begin{equation*}
   \mathcal{GD}_{\bf a, \bf b}(p_1, \cdots, p_n) := \frac{1}{\mathcal{Z}({\bf a}, {\bf b})} \ p_{n+1}^{b_n -1} \prod_{j=1}^n \left[p_j^{a_j -1} \left(1 - p_1 - \cdots -p_{j-1}\right)^{b_{j-1}- (a_j + b_j)}\right]
\end{equation*}
where $p_{n+1} = 1 - p_1 - \cdots -p_n$ and
\begin{equation*}
   \mathcal{Z}({\bf a}, {\bf b}) = \prod_j \frac{\Gamma(a_j)\Gamma(b_j)}{\Gamma(a_j+b_j)}
\end{equation*}
is the normalizing constant.

They called this distribution the Generalized Dirichlet distribution of parameters ${\bf a} = a_1, \cdots a_n$ and ${\bf b} = b_1, \cdots , b_n$.
We recover the Dirichlet distribution Dir$_n(a_1, \cdots, a_n ; b_n)$ when the parameters satisfy the relations $b_{j-1} = a_j + b_j$ i.e.
\begin{equation}
   \label{relab}
   b_j = a_{j+1} + \cdots + a_{n}+ b_n \ \ j = 1, \cdots , n-1\,.
\end{equation}

The two following properties are consequences of the construction (\ref{ztop}) (see also \cite{wong1998gdd}):

1)  $P^{(k)} = (P_1, \cdots, P_k) \stackrel{d}{=}\  \mathcal{GD}_{\bf a^{(k)}, \bf b^{(k)}}$

2)  For every $k = 1, \cdots n-1$, conditionally upon $Z_1 , \cdots Z_k$,
\begin{equation*}
 \left(\frac{P_{k+1}}{1- P_1 - \cdots -P_k} , \cdots , \frac{P_{n}}{1- P_1 - \cdots -P_k}\right) \stackrel{d}{=}\  \mathcal{GD}_{a_{k+1}, \cdots, a_n, b_{k+1}, \cdots, b_n}
\end{equation*}

All the above models, where we provide each $Z_j$ with a Beta distribution with prescribed parameters, yield $\mathcal{GD}$ distribution for the corresponding vector $P$.
Conversely, it is known that if  $P$ is uniformly distributed on the simplex $\mathcal{S}_n^<$, then $Z_j$ is Beta$(1, n-j+1)$ distributed for $j \leq n$.
The $\mathcal{GD}$ distribution has a more general covariance structure than the Dirichlet distribution.

In the Section~\ref{ssec:samplig_ball} we carry out the same construction for $\ell^p$ ball.

\subsection{Real canonical moments}

In this section, we recall some interesting objects related to  moment spaces. In \cite{Knothe}, aiming to extend Brunn-Minkowki's theorem to convex bodies, Knothe introduced a general coordinate change. Skibinsky \cite{Skibinsky} used this tool in the context of moment spaces. His goal was the study of some geometric aspects of these sets. In this context the new coordinates are called canonical moments. These quantities  play an important role in moment problem theory. Indeed, they appear in many topics such as the orthogonal polynomial recurrence relation, the Stieltjes transform (and its expansion in continued fraction), etc... Actually the canonical moments seem to be more intrinsically related to the probability measures than the algebraic moments. In Section~\ref{sec.momens.complex} we present the canonical moments for complex moment space.
Although a geometric construction is possible we define them using orthogonal polynomials on the unit complex circle following Simon in \cite{Simon1}. We refer to the excellent book of Dette and Studden \cite{DS} for a complete overview on canonical moments. In next section we will carry the same geometric construction to $\ell_p$ balls.
%}
Recall that we denote  by ${\mathcal{M}_1([0,1])}$ the set of all probability measures on $[0,1]$  and by $M_N^{[0,1]}$ the $N$-th algebraic moment space generated by probability measures on $[0,1]$ (see \ref{momo1}).
Let $\mu \in \mathcal{M}_1([0,1])$,  we define, for $n \geq 1$,
\begin{eqnarray*}
   c_{n+1}^+\left(\mu\right) &=& \max\left\{r \in \mathbb{R} : (m_1, \cdots , m_n, r) \in M_{n+1}^{[0,1]}\right\}\\
   c_{n+1}^-\left(\mu\right) &=& \min\left\{r \in \mathbb{R} : (m_1, \cdots , m_n, r) \in M_{n+1}^{[0,1]}\right\},
\end{eqnarray*}
where $(m_1,m_2,...,m_n)$ is the vector of $n$ first moments of $\mu$. The first canonical moment is $c_1 = m_1$ and, for $n \geq 1$, the $n+1$-th canonical moment is defined as
\begin{equation*}
   \label{defcanonr}
   c_{n+1}(\mu) =  \frac{m_n - c_{n+1}^-(\mu)}{ c_{n+1}^+(\mu) - c_{n+1}^-(\mu)}
\end{equation*}
whenever $c_{n+1}^+(\mu) > c_{n+1}^-(\mu)$. The last condition is verified if, and only if, 
\[(m_1,m_2,...,m_n)\in\mathrm{int} M_{n}^{[0,1]}\,\]
 Obviously, the canonical moments depend on $\mu$ just through its moment vector. Thus, given  $(m_1,m_2,...,m_n)\in \mathrm{int} M_{n}^{[0,1]}$  the vector of $n$ first canonical moments is completely defined. 
Furthermore, the mapping $(m_1,m_2,...,m_N) \mapsto (c_1,c_2,...,c_N)$ from $\mathrm{int}M_N^{[0,1]}$ onto $(0,1)^N$ is bijective and triangular in the sense that for every $k\leq N$, $c_k$ depends only on $(m_1,m_2,...,m_k)$ and not on $m_{k+1} , \cdots , m_N$. 
%It is  known that if $\mu \in \mathcal{M}_1([0,1])$ is linearly transformed with positive slope to $\mu'  \in \mathcal{M}_1([a,b])$ then their canonical moments are the same. Therefore, in the sequel we assume $[a,b] = [0,1]$.  
%
The {\it range} sequence is given by the following relation due to Skibinsky  (see \cite{Skibinsky} or Theorem 1.4.9 in \cite{DS})
\begin{equation*}
\label{range}
    c_{i}^+(\mu)- c_{i}^-(\mu) = \prod_{j=1}^{i-1}c_j(\mu)\left(1 - c_j(\mu) \right),\ i=2,3,...,N.
\end{equation*}
From this relation, it follows that the bijective mapping $m \mapsto c$ from int $M_N^{[0,1]}$ onto $(0,1)^N$ is a diffeomorphism whose Jacobian is
\begin{equation}\label{realjacob}
      \frac{\partial (m_1 \cdots, m_N)}{\partial (c_1, \cdots, c_N)}= \prod_{j=1}^{N-1} \left(c_j (1- c_j)\right)^{N-j}\, .
\end{equation}

The Jacobian in \eqref{realjacob} leads to the following result.
\begin{theorem}[Chang, Kemperman, Studden]\label{thm:chang}
\label{unifreal}
If $M_N^{[0,1]}$ is endowed with the uniform distribution, then the random canonical moments $C_j$ $j = 1, \cdots, N$ are independent and
$$\mathbb{P} (C_j \in dx) = \frac{(2N-2j +1)!}{\left((N-j)!\right)^2} \ x^{N-j}(1-x)^{N-j} {\mathbf 1}_{(0,1)}(x)\ dx.$$
\end{theorem}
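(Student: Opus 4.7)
The plan is a one-step change-of-variables argument, since everything we need is already in place: the affine structure of the simplex of moments, the diffeomorphism $m\mapsto c$ from $\mathrm{int}\, M_N^{[0,1]}$ onto $(0,1)^N$, and its Jacobian \eqref{realjacob}.

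First, I would observe that the uniform distribution on $M_N^{[0,1]}$ assigns $(m_1,\dots,m_N)$ the constant density $1/\mathrm{vol}(M_N^{[0,1]})$ on the interior. Because Skibinsky's formula gives a global diffeomorphism with Jacobian
\begin{equation*}
  \frac{\partial(m_1,\dots,m_N)}{\partial(c_1,\dots,c_N)} \;=\; \prod_{j=1}^{N-1}\bigl(c_j(1-c_j)\bigr)^{N-j},
\end{equation*}
the joint density of $(C_1,\dots,C_N)$ on $(0,1)^N$ is proportional to this Jacobian. The key observation is that this Jacobian is a product of one-variable factors, so the joint density factorizes as $\prod_{j=1}^{N} f_j(c_j)$, where $f_j(c_j)\propto c_j^{N-j}(1-c_j)^{N-j}$ (with the convention that the exponent $0$ for $j=N$ produces a constant, i.e.\ $C_N$ is uniform). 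This factorization immediately gives independence of $C_1,\dots,C_N$ and identifies each marginal as a symmetric Beta distribution on $(0,1)$.

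The final step is to pin down the constants. Using the Beta integral,
\begin{equation*}
  \int_0^1 x^{N-j}(1-x)^{N-j}\,dx \;=\; B(N-j+1,N-j+1) \;=\; \frac{\bigl((N-j)!\bigr)^{2}}{(2N-2j+1)!},
\end{equation*}
so that the normalizing constant for the $j$-th marginal is exactly $(2N-2j+1)!/((N-j)!)^2$, matching the claim. As a bonus, multiplying the marginal constants together recovers the volume of $M_N^{[0,1]}$; no separate computation of this volume is needed, since the proportionality constants are forced by the requirement that each $f_j$ integrate to $1$.

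The only nontrivial input is the Jacobian \eqref{realjacob}, which is already stated and attributed to Skibinsky; conceptually, the whole proof consists in noticing that this Jacobian is a \emph{separable} product, which is precisely the feature that makes canonical moments such a useful parametrization. So there is no real obstacle beyond the change-of-variables bookkeeping.
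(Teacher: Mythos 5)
Your proof is correct and is precisely the change-of-variables argument the paper intends: the sentence preceding the statement says the Jacobian \eqref{realjacob} ``leads to the following result,'' and the paper then simply cites Chang, Kemperman and Studden rather than writing out the short computation you have supplied. Your execution --- constant density on $M_N^{[0,1]}$ pushed through the triangular diffeomorphism, so the density of $(C_1,\dots,C_N)$ is proportional to the separable product $\prod_{j=1}^{N-1}\bigl(c_j(1-c_j)\bigr)^{N-j}$, which factorizes into independent $\mathrm{Beta}(N-j+1,N-j+1)$ marginals with the normalizing constants forced by the Beta integral --- is exactly that route.
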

It is Theorem 1.3 in \cite{Chang} (see also p. 305 in \cite{DS}). In other words $C_j$ is $\mathrm{Beta}(N-j+1, N-j+1)$ distributed.

\subsection{Canonical coordinates on the unit real ball}
\label{canonr}
Following the last geometric definition of canonical moments in $M_N^{[0,1]}$, it is possible to build similar quantities on the $\ell_p$ ball. To begin with,
let for $N\geq 1$ and $1\leq p<\infty$ let $\mathbb{B}_{N,p}$ (resp. $\mathbb{B}^\mathbb{C}_{N,p}$)
be the the unit $p$-ball of $\mathbb{R}$,  (resp. on $\mathbb{C}$). That is,
\begin{equation*}
     \mathbb{B}_{N,p}:= \left\{x=(x_1,x_2,...,x_N)\in \mathbb{R}^N :\|x\|_p:= \left(\sum_{i=1}^{N} |x_i|^p\right)^{1/p} < 1\right\}.
\end{equation*}
Extending the previous notation, $\mathbb{B}_{N,\infty}$ will denote the unit open $l^{\infty}$ ball, i.e. $(-1,1)^N$.
For $x=(x_1,x_2,...,x_N)\in\mathbb{R}^{N}$ and $1\leq k\leq N$, set $x^{(k)}=(x_1,x_2,...,x_k)$, i.e.
the subvector of  $x$ built by its first $k$ coordinates. By convention,
we set $\|x^{(0)}\|_p:=0$. We introduce the \textit{canonical coordinates} $c=(c_{1},\ldots,c_{N})\in (-1,1)^N$ of $x\in\mathbb{B}_{N,p}$
\begin{equation}
   \begin{aligned}
   c_1 &= x_1,\label{unun}
   \\
   c_k &= \frac{x_k}{\sqrt[p]{1-\|x^{(k-1)}\|_p^p}},\quad k=2,3,\ldots,N.
\end{aligned}
\end{equation}
Knowing $x^{(k-1)}$, we see that $\left(-\left(1-\|x^{(k-1)}\|_p^p\right)^{1/p}, \left(1-\|x^{(k-1)}\|_p^p\right)^{1/p}\right)$
is the admissible range of $x_k$ in order that $x$ lies in $\mathbb{B}_{N,p}$.
Let us denote by  $\mathcal{C}_N$ the mapping from $\mathbb{B}_{N,p}$ onto $(-1,1)^N$ which associates to any $x\in \mathbb{B}_{N,p}$ the point $c=(c_1,\cdots,c_N)$ defined in \eqref{unun}.
The following key property is straightforward.
\begin{lemma}
\label{lemjacob}
 The mapping $\mathcal{C}$ is a triangular $C^1$-diffeomorphism, its inverse is given by
 \begin{equation*}
   [\mathcal{C}^{-1}(c)]_k = c_k\sqrt[p]{(1-|c_1|^p)(1-|c_2|^p)\cdots(1-|c_{k-1}|^p)},\quad
      k = 1,2,\cdots,N,
   \label{oninverse}
\end{equation*}
 the Jacobian matrix is lower triangular and its determinant is
\begin{equation*}
   \label{Jacob}
   \frac{\partial (x_1, \cdots, x_N)}{\partial (c_1, \cdots , c_N)}  =\prod_{k=1}^{N} (1-|c_k|^p)^{\frac{N-k}{p}}, \quad
         (c_1,\cdots,c_N)\in (-1, 1)^N\,.
\end{equation*}
\end{lemma}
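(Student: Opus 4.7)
The plan is to reduce everything to the single identity
$$1 - \|x^{(k)}\|_p^p \;=\; \prod_{j=1}^{k}\bigl(1-|c_j|^p\bigr), \qquad k=1,\ldots,N,$$
which I would prove by induction on $k$. The base case $k=1$ is $c_1=x_1$. For the step, the defining relation $c_k = x_k/(1-\|x^{(k-1)}\|_p^p)^{1/p}$ yields $|x_k|^p = |c_k|^p\,(1-\|x^{(k-1)}\|_p^p)$, hence
$$1 - \|x^{(k)}\|_p^p \;=\; \bigl(1-\|x^{(k-1)}\|_p^p\bigr) - |x_k|^p \;=\; \bigl(1-\|x^{(k-1)}\|_p^p\bigr)\bigl(1-|c_k|^p\bigr),$$
and the inductive hypothesis closes the argument. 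The same identity shows in particular that $x\in\mathbb{B}_{N,p}$ forces $|c_j|<1$ for all $j$, and conversely if $(c_1,\dots,c_N)\in(-1,1)^N$ then the reconstructed $x$ lies in $\mathbb{B}_{N,p}$ because $\|x\|_p^p = 1-\prod_j(1-|c_j|^p)<1$.

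Given this identity, the inverse formula falls out at once: substituting back into $x_k = c_k(1-\|x^{(k-1)}\|_p^p)^{1/p}$ produces $[\mathcal{C}^{-1}(c)]_k = c_k\prod_{j=1}^{k-1}(1-|c_j|^p)^{1/p}$, and a direct verification shows that the two maps are mutual inverses between $\mathbb{B}_{N,p}$ and $(-1,1)^N$. Triangularity is apparent from both formulas: $x_k$ depends only on $(c_1,\ldots,c_k)$ and $c_k$ only on $(x_1,\ldots,x_k)$. The Jacobian determinant therefore reduces to the product of the diagonal entries of $\mathrm{D}\mathcal{C}^{-1}$, namely $\partial x_k/\partial c_k = \prod_{j=1}^{k-1}(1-|c_j|^p)^{1/p}$. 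Multiplying over $k$ and observing that each factor $(1-|c_j|^p)^{1/p}$ is counted once for every $k>j$, i.e. exactly $N-j$ times, yields the announced formula $\prod_{k=1}^{N}(1-|c_k|^p)^{(N-k)/p}$ (with the $k=N$ factor trivial).

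The only delicate point is the $C^1$ regularity: the map $t\mapsto|t|^p$ is $C^1$ on $\mathbb{R}$ when $p>1$ but not at $0$ when $p=1$. In that borderline case, the relevant composition appearing in $\mathcal{C}^{-1}$ is $c_j\mapsto c_j(1-|c_j|)=c_j-c_j|c_j|$, whose derivative $1-2|c_j|$ is continuous on $(-1,1)$, so $\mathcal{C}^{-1}$ is $C^1$ for every $p\geq 1$. Since the Jacobian determinant does not vanish on $(-1,1)^N$, the inverse function theorem upgrades $\mathcal{C}$ itself to a $C^1$-diffeomorphism. I do not foresee a real obstacle here; the induction identity is the substantive step, and the only genuine care needed is in dispatching the smoothness question at $p=1$.
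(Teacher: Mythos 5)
Your approach is the right and direct one: the paper simply declares this lemma ``straightforward'' and offers no proof, so there is no competing argument to compare against, and your induction identity $1-\|x^{(k)}\|_p^p=\prod_{j=1}^k(1-|c_j|^p)$, the inverse formula, the triangularity observation, and the Jacobian computation are all correct for $p>1$.

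Your treatment of the $p=1$ case, however, is mistaken, and in fact exposes a flaw in the lemma as stated. You claim the only way $|c_j|$ enters $\mathcal{C}^{-1}$ is through the composition $c_j\mapsto c_j(1-|c_j|)$, but that product never occurs: the bare factor $c_j$ sits inside the $j$-th component $x_j$, while the factor $1-|c_j|$ sits inside each later component $x_k$ with $k>j$, multiplied by the independent variable $c_k$. For $j<k$ one computes
\begin{equation*}
\frac{\partial x_k}{\partial c_j}\;=\;-c_k\,\mathrm{sgn}(c_j)\prod_{i<k,\ i\neq j}(1-|c_i|),
\end{equation*}
which jumps at $c_j=0$ whenever $c_k\neq 0$. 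Indeed $\mathcal{C}^{-1}$ is not even differentiable there: for $N=2$, $p=1$, the function $c_1\mapsto x_2(c_1,\tfrac{1}{2})=\tfrac{1}{2}(1-|c_1|)$ has one-sided derivatives $\pm\tfrac{1}{2}$ at $c_1=0$, so $\partial x_2/\partial c_1$ does not exist at $(0,\tfrac{1}{2})$. Thus the ``$C^1$-diffeomorphism'' assertion fails for $p=1$; the correct statement in that boundary case is that $\mathcal{C}$ is a triangular Lipschitz homeomorphism, differentiable off a Lebesgue-null set, with the stated Jacobian wherever the derivative exists -- which is all that is needed for the change-of-variables arguments elsewhere in the paper. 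Your argument for $p>1$ is sound: there $t\mapsto|t|^p$ is $C^1$ on $\mathbb{R}$, $\mathcal{C}^{-1}$ is $C^1$ with everywhere-positive Jacobian on $(-1,1)^N$, and the inverse function theorem upgrades the bijection $\mathcal{C}$ to a $C^1$-diffeomorphism.
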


\subsection{Extension to the unit complex ball}
\label{complexball}

We extend the previous construction to the complex framework. First define
\begin{equation}\label{defboulecomplex}
     \mathbb{B}_{N,p}^{\mathbb{C}}:=
     \left\{z=(z_1,z_2,...,z_N)\in \mathbb{C}^N :\|z\|_p=
     \left(\sum_{i=1}^{N} |z_i|^p\right)^{1/p} < 1\right\}.
\end{equation}
The canonical coordinates are now
\begin{equation*}
\begin{split}
   c_1 &= z_1,
   \\
   c_k &= \frac{z_k}{\sqrt[p]{1-\|z^{(k-1)}\|_p^p}},\quad k=2,3,\ldots,N
\end{split}
\end{equation*}
where for $z=(z_1,z_2,...,z_N)$, as in the real case, we set $z^{(k)} = (z_1,z_2,...,z_k)$, $k=1,2,...,N$.
Now the canonical coordinates belong to $\mathbb{D}$. Conversely we have
\begin{equation}
   z_k = c_k\sqrt[p]{(1-|c_1|^p)(1-|c_2|^p)\cdots(1-|c_{k-1}|^p)}\quad
   k=1,2,\cdots,N.
   \label{oninversec}
\end{equation}
We now point up the case $p=2$. Consider the bijection $T:\mathbb{C}^N \to \mathbb{R}^{2N}$,
\begin{equation} \label{isometry}
     T(x_1+iy_1,...,x_N+iy_N)  = (x_1,y_1,...,x_N,y_N).
\end{equation}
Only for $p=2$ we have $T\left(\mathbb{B}_{N,p}^{\mathbb{C}}\right) = \mathbb{B}_{2N,p}$. However, the image of the canonical coordinates of $z \in \mathbb{B}_{N,2}^{\mathbb{C}}$ by the previous bijective map are not, in general, equal to the canonical coordinates of the image of $z$. 

%In the next section we make use of this bijection between $\mathbb{B}_{N,2}^{\mathbb{C}}$ and $\mathbb{B}_{2N,2}$ to describe the uniform distribution in $M_N^{\mathbb{T}}$.

\subsection{Sampling on the $\ell^p$ ball}
\label{ssec:samplig_ball}
\subsubsection{Simplex and balls}
The simplest way to sample in the ball is to use the uniform distribution. So that, the stick-breaking scheme gives  a first method to sample uniformly in $\mathbb{B}_{N,p}$.  As a matter of fact, it is enough to sample independent beta random variables and perform the change $\mathcal{C}_N^{-1}$.
This procedure is inherited from the sampling Dirichlet distribution as proposed in the book of Devroye (Theorem 4.2 p. 595 of \cite{luc} chapter 11 - see also the error file).
The following lemma gives a first connection with the simplex.
\begin{lemma}
\label{xixi0}
If $X := (X_1, \cdots, X_N)$ is uniformly distributed in $\mathbb{B}_{N,p}$, then
\begin{enumerate}
\item
\begin{equation*}
(X_1, \cdots , X_N) \stackrel{d}{=}\  \left(\varepsilon_1 \rho_1^{1/p}, \cdots , \varepsilon_N \rho_N^{1/p}\right)
\end{equation*}
where
$(\rho_1, \cdots, \rho_N)$ follows the  $\hbox{Dir}_N(p^{-1}, \cdots, p^{-1} ; 1)$ distribution on $\mathcal{S}_N^<$ and  the $\varepsilon$'s are independent, 
Rademacher distributed 
%(i.e. $\mathbb{P}(\varepsilon=1)=\mathbb{P}(\varepsilon=-1)=\frac{1}{2}$) 
and independent of $(\rho_1, \cdots, \rho_n)$.
\item $\Vert X\Vert_p^p$ is Beta$(N/p ,1)$ distributed. In other words
\begin{equation*}
   \Vert X\Vert_p^p \stackrel{d}{=}\  U^{1/N}
\end{equation*}
where $U$ is uniform on $[0,1]$.
\end{enumerate}
\end{lemma}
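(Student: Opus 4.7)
The plan is to exploit two features of the uniform distribution on $\mathbb{B}_{N,p}$: its invariance under coordinate sign flips, and the simple form of the density (constant on the ball).

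For part (1), I would first observe that the uniform distribution on $\mathbb{B}_{N,p}$ is invariant under the $2^N$ reflections $(x_1,\ldots,x_N)\mapsto (\pm x_1,\ldots,\pm x_N)$. Hence if we set $\varepsilon_i=\mathrm{sign}(X_i)$, the vector $(\varepsilon_1,\ldots,\varepsilon_N)$ is uniformly distributed on $\{-1,+1\}^N$ and independent of $(|X_1|,\ldots,|X_N|)$, which is itself uniform on $\mathbb{B}_{N,p}\cap (0,\infty)^N$. Next I would perform the change of variables $\rho_i=|X_i|^p$, noting that on the positive orthant this map is a diffeomorphism onto $\mathcal{S}_N^{<}$ with Jacobian $\prod_{i=1}^N \tfrac{1}{p}\rho_i^{1/p-1}$. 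Transferring the (constant) density of $(|X_1|,\ldots,|X_N|)$ through this map yields a density on $\mathcal{S}_N^{<}$ proportional to $\prod_{i=1}^N \rho_i^{1/p-1}$, which is exactly the density of $\mathrm{Dir}_N(p^{-1},\ldots,p^{-1};1)$. The identity $X_i=\varepsilon_i|X_i|=\varepsilon_i\rho_i^{1/p}$ then gives the claimed representation.

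For part (2), the key observation is that $\|X\|_p^p=\sum_{i=1}^N|X_i|^p=\sum_{i=1}^N \rho_i$, so I would simply apply the partial-sum grouping property (\ref{grouping}) of Dirichlet distributions to $(\rho_1,\ldots,\rho_N,1-\sum\rho_i)\sim \mathrm{Dir}(p^{-1},\ldots,p^{-1},1)$ with the partition $(\{1,\ldots,N\},\{N+1\})$. This collapses the distribution to $\mathrm{Dir}(N/p,1)=\mathrm{Beta}(N/p,1)$. The statement that $\|X\|_p^p\stackrel{d}{=}U^{p/N}$ with $U$ uniform on $[0,1]$ is then an immediate consequence of inverting the CDF $t\mapsto t^{N/p}$ of $\mathrm{Beta}(N/p,1)$.

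There is no real obstacle here: the only point requiring some care is tracking the factor $2^N$ from summing over orthants against the factor $2^N$ coming from the independent Rademacher signs, so that the density on $\mathcal{S}_N^{<}$ matches the Dirichlet normalization. Everything else is routine once the sign decomposition and the change of variables $\rho_i=|X_i|^p$ are in place; in particular, part (2) does not require a separate computation and follows entirely from part (1) and the grouping property recalled in Section~\ref{sec.dirichlet.world}.
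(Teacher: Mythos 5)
Your proof is correct and matches the paper's (very terse) argument: the paper also establishes (1) by a change of variables and (2) by the grouping property \eqref{grouping}. One remark: you write $\|X\|_p^p \stackrel{d}{=} U^{p/N}$, which is what actually follows from $\|X\|_p^p \sim \mathrm{Beta}(N/p,1)$; the display in the statement, $\|X\|_p^p \stackrel{d}{=} U^{1/N}$, appears to contain a typo and should read either $\|X\|_p \stackrel{d}{=} U^{1/N}$ (as used in \eqref{repres2}) or $\|X\|_p^p \stackrel{d}{=} U^{p/N}$, so your silent correction is right.
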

1) is direct  via a change of variables and 2) is a genuine application of (\ref{grouping}).

\subsubsection{$p$-Generalized Dirichlet distributions}
\label{susuge}
Extending the previous lemma to generalized Dirichlet distributions is quite natural.
\begin{definition}
\label{deden}
Let $a_j, b_j , j = 1, \cdots ,N$ be positive real numbers. We say that  $X$ follows  the  $p$-Generalized Dirichlet distribution on $\mathbb{B}_{N,p}$
with parameter $({\mathbf a}, {\mathbf b})=(a_1,\cdots,a_N, b_1,\cdots, b_N)$  whenever
\begin{equation*}
    (X_1, \cdots , X_N) \stackrel{d}{=}\ \left(\varepsilon_1 \rho_1^{1/p}, \cdots , \varepsilon_N \rho_N^{1/p}\right)
\end{equation*}
where $(\rho_1, \cdots, \rho_N)$ follows the  $\mathcal{GD}_{\mathbf a, \mathbf b}$ distribution on $\mathcal{S}_N^<$ and  the $\varepsilon$'s are independent, 
Rademacher distributed and independent of $(\rho_1, \cdots, \rho_N)$.
\end{definition}

From the Section~\ref{subeta}, we deduce that the $p$-Generalized Dirichlet distribution with parameter $({\mathbf a}, {\mathbf b})$ has a density with respect to the non-normalized Lebesgue measure on $\mathbb{B}_{N,p}$
\begin{equation}
   \label{grobetan}
   H_{{\mathbf a}, {\mathbf b}}(x)= \frac{p^N}{\mathcal{Z}_{{\mathbf a}, {\mathbf b}}} \left(1 - \|x|_p^p\right)^{b_N -1} \prod_{j=1}^{N} \left(1-\|x^{(j-1)}\|_p^p\right)^{b_{j-1} -(a_j + b_j)}|x_j|^{pa_j -1}\,.
\end{equation}

Let us now go back to the parametrization of the balls developed in above section. Using equations \eqref{unun} and  \eqref{oninversec} in the context of $\ell ^p$ ball we recover the canonical representation:
\begin{eqnarray}
   C_1 &=& X_1\nonumber\\
   C_j &=& X_j \left(1 - \|X^{(j-1)}\|_p^p\right)^{-1/p},\quad j= 2, \cdots N.
    \label{ptozp}
\end{eqnarray}
and
\begin{eqnarray}
   X_1 &=& C_1\nonumber\\
   X_j &=& C_j \left[\prod_{k=1}^{j-1} (1- |C_k|^p)\right]^{1/p}, \quad j= 2, \cdots N.
    \label{ztopp}
\end{eqnarray}

Here we easily get
\begin{proposition}
\label{proppropren}
For $a_j>0 ,\; b_j> 0, j=1,\cdots,N$, assume that $X$ has the $p$-Generalized Dirichlet distribution with parameter
$({\mathbf a} , {\mathbf b})$ on $\mathbb{B}_{N,p}$ and set $C:=\mathcal{C}_N(X)= (C_1,\cdots,C_N)$ (see Lemma~\ref{lemjacob}). Then
\begin{itemize}
\item[i)] the random variables $C_1,\cdots,C_N$ are independent,
\item[ii)] for $j=1,\cdots,N$, $C_j \stackrel{d}{=}\ \varepsilon_j Z_j^{1/p}$, where $\varepsilon_j$ and $Z_j$ are independent, $\varepsilon_j$ has the Rademacher distribution, and
\begin{equation*}
\label{zlaw}
Z_j \stackrel{d}{=}\ \mathrm{Beta} (a_j, b_j)\,.
\end{equation*}
 In other words, the density of $C_j$ is
\begin{equation*}
   \label{densC}
   \mathbb{P}(C_j \in dx) =\frac{p\Gamma(a_j + b_j)}{2\Gamma(a_j)\Gamma(b_j)} |x|^{pa_j -1}\left(1-|x|^p\right)^{b_j -1} {\mathbf 1}_{(-1, 1)} (x) \!\ dx\,.
\end{equation*}
\end{itemize}
\end{proposition}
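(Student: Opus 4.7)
My plan is to reduce the claim on the ball to the already-discussed stick-breaking representation of the $\mathcal{GD}$ distribution on the simplex, by exploiting the fact that $|X_j|^p$ is literally the $j$-th simplex coordinate.

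First, I would unfold Definition~\ref{deden} and write $X_j = \varepsilon_j \rho_j^{1/p}$ where $(\rho_1,\ldots,\rho_N) \sim \mathcal{GD}_{\mathbf a,\mathbf b}$ on $\mathcal{S}_N^<$ and the $\varepsilon_j$'s are i.i.d.\ Rademacher, independent of $\rho$. Then observe that
\begin{equation*}
    |X_j|^p = \rho_j, \qquad \|X^{(k)}\|_p^p = \sum_{i=1}^k \rho_i,
\end{equation*}
so in particular $1 - \|X^{(j-1)}\|_p^p = 1 - \rho_1 - \cdots - \rho_{j-1}$. Substituting this into the defining relation~\eqref{ptozp} of the canonical coordinates on the ball yields
\begin{equation*}
   C_j = \varepsilon_j \, \rho_j^{1/p}\bigl(1 - \rho_1 - \cdots - \rho_{j-1}\bigr)^{-1/p} = \varepsilon_j\, Z_j^{1/p},
\end{equation*}
where $Z_j := \rho_j/(1-\rho_1-\cdots-\rho_{j-1})$ are exactly the stick-breaking variables from~\eqref{ptoz}.

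Next, I would invoke the Connor--Mosimann construction recalled in Section~\ref{subeta}: since $\rho \sim \mathcal{GD}_{\mathbf a,\mathbf b}$, the stick-breaking coordinates $Z_1,\ldots,Z_N$ are independent with $Z_j \sim \mathrm{Beta}(a_j, b_j)$. Because the Rademacher family $(\varepsilon_j)$ is independent of $\rho$ (and hence of $(Z_j)$) and its entries are mutually independent, the whole collection $\{\varepsilon_j, Z_j : 1\leq j \leq N\}$ is mutually independent. Applying the product map $(\varepsilon_j, Z_j) \mapsto \varepsilon_j Z_j^{1/p}$ componentwise then delivers the independence of the $C_j$'s in part~(i) and the representation in part~(ii) simultaneously.

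Finally, for the explicit density of $C_j$, I would perform the one-dimensional change of variables $y = \varepsilon z^{1/p}$ with $z \in (0,1)$ and $\varepsilon \in \{\pm 1\}$: for $y > 0$, the density of $C_j$ at $y$ equals $\tfrac{1}{2}\cdot py^{p-1}\cdot f_{Z_j}(y^p)$ where $f_{Z_j}$ is the $\mathrm{Beta}(a_j,b_j)$ density, and symmetry under $y \mapsto -y$ gives the stated formula $\tfrac{p}{2B(a_j,b_j)}|y|^{pa_j-1}(1-|y|^p)^{b_j-1}\BBone_{(-1,1)}(y)$. There is no real obstacle here; the only point worth being careful about is the algebraic identification $1-\|X^{(j-1)}\|_p^p = 1 - \rho_1-\cdots-\rho_{j-1}$ which is what makes the simplex stick-breaking coincide with the ball's canonical parametrization.
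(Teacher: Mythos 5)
Your proof is correct and follows essentially the same route the paper intends: the key observation is that $|C_j|^p$ equals the stick-breaking coordinate $Z_j$ of the simplex point $\rho = (|X_1|^p,\ldots,|X_N|^p)$, and the independence plus $\mathrm{Beta}(a_j,b_j)$ marginal of the $Z_j$'s is exactly the Connor--Mosimann characterization of $\mathcal{GD}_{\mathbf a,\mathbf b}$ recalled in Section~\ref{subeta}. This is precisely what the paper's ``Here we easily get'' alludes to after restating the stick-breaking relations \eqref{ptozp}--\eqref{ztopp} in the ball.
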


When the relation (\ref{relab}) is satisfied we have
\begin{equation}
\label{defdensH}
    H_{{\mathbf a}, {\mathbf b}}(x)= \frac{p^N}{\mathcal{Z}_{{\mathbf a}, {\mathbf b}}} \left(1 - \|x\|_p^p\right)^{b_N -1}
                                  \prod_{j=1}^{N} |x_j|^{pa_j -1}\,.
\end{equation}
In that case, the family of distribution may be indexed by $N+1$ parameters $(a_1, \cdots, a_N, b_N)$. This family is stable by permutation of the coordinates, which is quite interesting.

The following result is the counterpart of Theorem~\ref{unifreal}.
% and Theorem~\ref{unifcomplex} in Section~\ref{ssec:mom_spc_GD}.
\begin{corollary}
\label{coro}
The density $H_{{\mathbf a}, {\mathbf b}}$ is  uniform  on $\mathbb{B}_{N,p}$ if, and only if, for every $j=1, \cdots , N$, $a_j = 1/p$ and $b_j = 1 + (N-j)/p$, or equivalently
\begin{align*}
   \nonumber
   Z_j &\stackrel{d}{=}\ \mathrm{Beta}\left(\frac{1}{p}, 1 + \frac{N-j}{p}\right),\\
   \mathbb{P}(C_j \in dx) &= \frac{p\Gamma\left(\frac{N-j+1}{p} + 1\right)}%
   {2 \Gamma\left(\frac{1}{p} + 1\right)\Gamma\left(\frac{N-j}{p} + 1\right)}\ \left(1 - |x|^p\right)^{\frac{N-j}{p}} \!\ {\mathbf 1}_{(-1, 1)}(x) \!\ dx\,.
\end{align*}
\end{corollary}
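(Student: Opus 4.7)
The plan is to use Lemma \ref{lemjacob} together with Proposition \ref{proppropren} to reduce everything to matching two product densities on the cube $(-1,1)^N$.

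First I would push forward the uniform distribution on $\mathbb{B}_{N,p}$ through the canonical change of coordinates $\mathcal{C}_N$. By Lemma \ref{lemjacob} the determinant of the Jacobian of $\mathcal{C}_N^{-1}$ factorizes as $\prod_{k=1}^N(1-|c_k|^p)^{(N-k)/p}$, so the push forward of the uniform law is, on $(-1,1)^N$,
\begin{equation*}
\frac{1}{|\mathbb{B}_{N,p}|}\prod_{k=1}^{N}(1-|c_k|^p)^{(N-k)/p}.
\end{equation*}
This product structure shows that, under the uniform measure, the canonical coordinates $C_1,\dots,C_N$ are independent, with $C_k$ having density proportional to $(1-|c|^p)^{(N-k)/p}\mathbf{1}_{(-1,1)}(c)$.

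Second, I would invoke Proposition \ref{proppropren}: under $H_{\mathbf{a},\mathbf{b}}$ the coordinates $C_1,\dots,C_N$ are also independent, with $C_j$ having density proportional to $|c|^{pa_j-1}(1-|c|^p)^{b_j-1}\mathbf{1}_{(-1,1)}(c)$. Since $\mathcal{C}_N$ is a $C^1$-diffeomorphism, $H_{\mathbf{a},\mathbf{b}}$ coincides with the uniform distribution on $\mathbb{B}_{N,p}$ if and only if the two induced densities for $C_j$ agree on $(-1,1)$ for every $j$. Identifying the two expressions term by term forces
\begin{equation*}
pa_j-1=0\quad\text{and}\quad b_j-1=\frac{N-j}{p},
\end{equation*}
which gives $a_j=1/p$ and $b_j=1+(N-j)/p$ as claimed. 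Conversely, with these values one checks that the relation (\ref{relab}) holds (since $b_{j-1}-b_j=1/p=a_j$) and therefore the simplified form (\ref{defdensH}) applies; the exponents $pa_j-1$ and $b_N-1$ then both vanish, so $H_{\mathbf{a},\mathbf{b}}$ is constant on $\mathbb{B}_{N,p}$.

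Finally, the equivalent description of $Z_j$ and $C_j$ is just a direct substitution of the values $a_j=1/p$ and $b_j=1+(N-j)/p$ into the two formulas displayed in Proposition \ref{proppropren}, using $a_j+b_j=1+(N-j+1)/p$ to simplify the normalizing constant. No real obstacle is expected: the only point to be careful about is identifying the two marginal densities up to normalization, which is legitimate because each is a probability density on $(-1,1)$ and so the normalizations are determined by the exponents. The whole argument hinges on the factorization of the Jacobian in Lemma \ref{lemjacob}, which turns a potentially delicate identification of densities on $\mathbb{B}_{N,p}$ into a one-dimensional comparison.
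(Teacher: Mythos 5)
Your proof is correct and is the argument the paper intends: Corollary~\ref{coro} follows from the factorized Jacobian of Lemma~\ref{lemjacob}, which identifies the law of $(C_1,\dots,C_N)$ under the uniform distribution as a product measure on $(-1,1)^N$, together with Proposition~\ref{proppropren}, and matching the two product densities coordinate by coordinate gives both implications exactly as you describe. One incidental remark: working out the normalization you defer shows that the displayed constant for $\mathbb{P}(C_j\in dx)$ carries a spurious factor of $p$. Substituting $a_j=1/p$ and $b_j=1+(N-j)/p$ into the constant $p\Gamma(a_j+b_j)/(2\Gamma(a_j)\Gamma(b_j))$ of Proposition~\ref{proppropren}, and using $\Gamma(1/p)=p\,\Gamma(1/p+1)$, gives
\[
\frac{\Gamma\left(\frac{N-j+1}{p}+1\right)}{2\,\Gamma\left(\frac{1}{p}+1\right)\Gamma\left(\frac{N-j}{p}+1\right)}\,,
\]
without the leading $p$ (equivalently, the printed $\Gamma(1/p+1)$ in the denominator should read $\Gamma(1/p)$); for instance at $j=N$ the density must be the constant $1/2$, whereas the printed formula gives $p/2$. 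This is a typo in the statement and does not affect your argument.
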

\medskip

\subsubsection{Some other sampling schemes}
\label{susuoth}
There are two other ways to sample uniformly in $\mathbb{B}_{N,p}$. They are issued from the polar decomposition. We give them for sake of completeness and because they will be useful in settling some limit theorems.

Let $\partial \mathbb{B}_{N,p}$ be  the  boundary of $\mathbb{B}_{N,p}$ (also called the sphere). The polar decomposition is the mapping
\begin{eqnarray*} \pi : \mathbb{R}^N \setminus \{0\} &\rightarrow& (0, \infty) \times\partial \mathbb{B}_{N,p}\\
\nonumber
x &\mapsto& \pi(x) = \left(\Vert x\Vert_p \ , \ \phi(x) := \frac{x}{\Vert x\Vert_p}\right)
\end{eqnarray*}
Let $\lambda_N$ be the Lebesgue measure of $\mathbb{R}^N$ and set
$$V_{N,p} := \lambda_N (\mathbb{B}_{N,p}) = 2^N \frac{\left(\Gamma\left(1 + \frac{1}{p}\right)\right)^N}{\Gamma\left(1 + \frac{N}{p}\right)}\,.$$
Let $\mu_B$  be the cone (probability) measure defined by
\begin{equation*}
    \mu_B (\Delta) = V_{N,p}^{-1} \ \lambda_N([0,1]\Delta)\; \;\;(\mbox{$\Delta$   measurable subset  of $\partial \mathbb{B}_{N,p}$}),
\end{equation*}
where  
$$[0,1]\Delta:=\{r \phi:\; r\in [0,1], \;\phi\in\Delta \}.$$
In other words, we have for any test function $f$
the decomposition (\cite{Naor1} Proposition 1)
$$\int_{\mathbb{R}^N} f(x) dx = NV_{N,p} \int_0^\infty r^{N-1} \int_{\partial \mathbb{B}_{N,p}} f(r\cdot \varphi) d\mu_B (\varphi) dr.$$
The probability $\mu_B$ is the image of the normalized Lebesgue measure on $\mathbb{B}_{N,p}$ by the mapping  $\phi$.
Notice that the cone probability and the surface measure are proportional if and only if $p=1$ or $2$.

\begin{lemma}
The pushing forward of $\mu_B$ by the mapping $$(x_1 , \cdots , x_N)\in \partial\mathbb{B}_{N,p} \mapsto (|x_1|^p, \cdots , |x_N|^p)\in \mathcal{S}_N$$ is Dir$_N(p^{-1})$. Moreover, for $1\leq k < N$,
\begin{equation*}\Vert x^{(k)}\Vert_p^p \stackrel{d}{=}\ \mathrm{Beta}\left(\frac{k}{p}, \frac{(N-k)}{p}\right)\,.\end{equation*}
\end{lemma}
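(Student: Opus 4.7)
The plan is to reduce everything to Lemma \ref{xixi0} and the grouping/quotient properties of Dirichlet distributions recalled in Section \ref{sec.dirichlet.world}. First I would describe $\mu_B$ via the uniform distribution on the ball: if $X$ is uniformly distributed on $\mathbb{B}_{N,p}$, then by the very definition of the cone measure, $\phi(X) = X/\|X\|_p$ has distribution $\mu_B$ on $\partial\mathbb{B}_{N,p}$.

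Then I would apply part (1) of Lemma \ref{xixi0} to represent
\begin{equation*}
(X_1,\dots,X_N) \stackrel{d}{=} \bigl(\varepsilon_1 \rho_1^{1/p},\dots,\varepsilon_N \rho_N^{1/p}\bigr),
\end{equation*}
where $(\rho_1,\dots,\rho_N)\sim\mathrm{Dir}_N(p^{-1},\dots,p^{-1};1)$, i.e.\ $(\rho_1,\dots,\rho_N,\rho_{N+1})\sim\mathrm{Dir}(p^{-1},\dots,p^{-1},1)$ on $\mathcal{S}_{N+1}$ with $\rho_{N+1}=1-\sum_{j=1}^N\rho_j$. With this representation, $|X_j|^p = \rho_j$ and $\|X\|_p^p = \sum_{j=1}^N\rho_j = 1-\rho_{N+1}$, so the image of $\phi(X)$ under $(x_1,\dots,x_N)\mapsto(|x_1|^p,\dots,|x_N|^p)$ is exactly
\begin{equation*}
\Bigl(\frac{\rho_1}{\rho_1+\cdots+\rho_N},\dots,\frac{\rho_N}{\rho_1+\cdots+\rho_N}\Bigr).
\end{equation*}
Applying the quotient property \eqref{quot} to the partition $\sigma_i=\{i\}$ for $i=1,\dots,N+1$ yields immediately that this vector follows $\mathrm{Dir}(p^{-1},\dots,p^{-1})=\mathrm{Dir}_N(p^{-1})$ on $\mathcal{S}_N$, which is the first claim.

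For the second claim, writing $\phi(X)^{(k)}=(X_1,\dots,X_k)/\|X\|_p$, I compute
\begin{equation*}
\|\phi(X)^{(k)}\|_p^p = \frac{\rho_1+\cdots+\rho_k}{\rho_1+\cdots+\rho_N}.
\end{equation*}
I would then apply the partial sum grouping \eqref{grouping} with the partition $\sigma_1=\{1,\dots,k\}$, $\sigma_2=\{k+1,\dots,N\}$, $\sigma_3=\{N+1\}$ to obtain $(\rho_1+\cdots+\rho_k,\ \rho_{k+1}+\cdots+\rho_N,\ \rho_{N+1})\sim\mathrm{Dir}(k/p,(N-k)/p,1)$ on $\mathcal{S}_3$, and then \eqref{quot} gives that the first coordinate after normalisation by $1-\rho_{N+1}$ is $\mathrm{Beta}(k/p,(N-k)/p)$ distributed.

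There is no real obstacle: the proof is a direct translation of the cone measure into the Dirichlet framework via Lemma \ref{xixi0}, followed by the two structural properties \eqref{grouping} and \eqref{quot}. The only point to handle carefully is keeping track of the auxiliary coordinate $\rho_{N+1}$ so that the grouping/quotient lemmas are applied on a genuine simplex of the correct dimension.
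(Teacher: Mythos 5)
Your proof is correct. The paper states this lemma without proof, but your argument is exactly in the spirit of the surrounding material: reduce to the Dirichlet representation of the uniform distribution on the ball from Lemma~\ref{xixi0}, then invoke the grouping and quotient properties recalled in Section~\ref{sec.dirichlet.world}. Every step checks out: once $|X_j|^p=\rho_j$ with $(\rho_1,\dots,\rho_N,\rho_{N+1})\sim\mathrm{Dir}(p^{-1},\dots,p^{-1},1)$, the map $x\mapsto(|x_1|^p,\dots,|x_N|^p)$ applied to $X/\|X\|_p$ is the normalized vector $(\rho_j/(1-\rho_{N+1}))_{j\le N}$, and \eqref{quot} with $\sigma_i=\{i\}$ gives $\mathrm{Dir}_N(p^{-1})$; grouping into $\{1,\dots,k\}$, $\{k+1,\dots,N\}$, $\{N+1\}$ and again \eqref{quot} gives the Beta marginal. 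An alternative, slightly more direct route (closer in flavour to Lemma~\ref{norm} and representation~$\mathbf{A}$) is to sample $\mu_B$ as $G^{(N)}/\|G^{(N)}\|_p$ with $g_i$ i.i.d.\ $\mathbf{G}_p$, observe $|g_i|^p\sim\gamma(p^{-1})$ independently, and apply the Gamma-to-Dirichlet identity \eqref{10} in one shot; this avoids carrying the auxiliary coordinate $\rho_{N+1}$ and the extra normalisation. Both arguments are valid; yours has the merit of using only material introduced before the lemma in the paper's exposition.
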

The following lemma is well known.
\begin{lemma}
\label{norm}
If $\eta^{(N)} := (\eta_1, \cdots, \eta_N)$ has a distribution depending only upon
$\Vert \eta^{(N)}\Vert_p$ without any mass in $0$, then $\eta^{(N)}/\Vert \eta^{(N)}\Vert_p$ is independent of $\Vert \eta^{(N)}\Vert_p$ and its distribution is the cone probability $\mu_B$ on $\partial \mathbb{B}_{N,p}$.
\end{lemma}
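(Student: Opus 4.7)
The plan is to read the hypothesis as asserting that $\eta^{(N)}$ admits a Lebesgue density on $\mathbb{R}^N$ of the form $x \mapsto g(\|x\|_p)$ for some Borel function $g:(0,\infty) \to [0,\infty)$; the assumption of no mass at the origin ensures the unit vector $\phi(\eta) := \eta/\|\eta\|_p$ is almost surely well-defined and that the value of $g$ at $0$ is irrelevant. With this interpretation, everything reduces to a direct application of the polar decomposition formula displayed just before the lemma.

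Concretely, I fix arbitrary bounded measurable test functions $h:(0,\infty)\to\mathbb{R}$ and $k:\partial\mathbb{B}_{N,p}\to\mathbb{R}$, set $R:=\|\eta^{(N)}\|_p$, and apply the polar formula to the integrand $f(x)=g(\|x\|_p)\, h(\|x\|_p)\, k(x/\|x\|_p)$. Since $f$ factors as a radial part times a function of the direction, the $r$- and $\varphi$-integrals in the polar formula separate, yielding
$$\mathbb{E}\bigl[h(R)\, k(\phi(\eta^{(N)}))\bigr] = \Bigl(NV_{N,p}\int_0^\infty r^{N-1} g(r) h(r)\, dr\Bigr) \Bigl(\int_{\partial\mathbb{B}_{N,p}} k(\varphi)\, d\mu_B(\varphi)\Bigr).$$

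Reading off the marginals is then immediate: taking $k\equiv 1$ identifies the law of $R$ as having density $NV_{N,p}\, r^{N-1} g(r)$ on $(0,\infty)$ (and in particular forces $NV_{N,p}\int_0^\infty r^{N-1}g(r)\,dr = 1$ since $\eta^{(N)}$ has unit mass); taking $h\equiv 1$ then identifies the law of $\phi(\eta^{(N)})$ as exactly the cone probability $\mu_B$. Because the displayed identity factorizes as a product of a function of $h$ alone and a function of $k$ alone for all admissible test functions, the joint law of $(R,\phi(\eta^{(N)}))$ equals the product of its marginals, which is the independence statement.

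The only genuinely delicate point is the interpretation of the hypothesis; once one accepts that \emph{distribution depending only on $\|\cdot\|_p$} means a radially symmetric density in the $\ell^p$ sense, the polar decomposition formula does all the work and there is no substantive obstacle. To accommodate a law that is not absolutely continuous, a cleaner rephrasing is to assume that the conditional law of $\phi(\eta^{(N)})$ given $R$ does not depend on the value of $R$; one then disintegrates the push-forward of the law of $\eta^{(N)}$ along the polar map and recognises $\mu_B$ as the unique probability on $\partial\mathbb{B}_{N,p}$ appearing as the conditional factor when this procedure is applied to a radial density, for instance the one from Lemma~\ref{xixi0}.
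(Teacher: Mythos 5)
The paper gives no proof of this lemma, stating only that it is ``well known''; your argument via the polar decomposition formula displayed immediately before the lemma is the natural one, and it is correct. Your discussion of the interpretational subtlety (that the hypothesis should be read as $\eta^{(N)}$ having a Lebesgue density of the form $g(\|x\|_p)$, which is the case in all applications in the paper) is a sensible clarification, not a gap.
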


Let  $\xi^{(N)} := (\xi_1, \cdots, \xi_N)$ be uniformly distributed in $\mathbb{B}_{N,p}$.
Let $(g_k)_{k\geq 1}$ be an i.i.d sequence having ${\mathbf G}_p$ distribution (see Section \ref{sec.dirichlet.world} ). Further, set
$G^{(N)} = (g_1, \cdots, g_N)$. There are two methods to generate a uniform sampling $\xi^{(N)}$ in the ball, both using a draw of $G^{(N)}$ and an extra independent variable.

$\bf A)$ According to  Lemma \ref{norm},
\begin{equation*}
    \phi^{(N)} := \frac{G^{(N)}}{\Vert G^{(N)}\Vert_p}
\end{equation*}
is independent of $\Vert G^{(N)}\Vert_p$ and is $\mu_B$ distributed  on $\partial \mathbb{B}_{N,p}$. From Lemma \ref{xixi0} 2), we claim as in \cite{Calafiorebis}  \footnote{The conic measure is misnamed there ``surface measure''} that
\begin{equation}
   \label{repres2}
   \xi^{(N)}\ \stackrel{d}{=}\ U^{1/N}\cdot \phi^{(N)},
\end{equation}
where $U$ is uniform on $[0,1]$ and sampled independently of $(g_k)_{k\geq 1}$.

$\bf B)$ In \cite{Barthe1}, it is proved that
\begin{equation*}
    \xi^{(N)} \stackrel{d}{=}\  \frac{G^{(N)}}{\left(\Vert G^{(N)}\Vert_p^p + Z\right)^{1/p}}
\end{equation*}
where $G^{(N)}$ is as above and  $Z$ is exponentially distributed and independent of $G^{(N)}$. Let us give a direct proof here since it is easy in our framework. Writing the righthand side as
$$\frac{G^{(N)}}{\left(\Vert G^{(N)}\Vert_p^p + Z\right)^{1/p}} = \frac{G^{(N)}}{\Vert G^{(N)}\Vert_p} \cdot \left(\frac{\Vert G^{(N)}\Vert_p^p}{\Vert G^{(N)}\Vert_p^p + Z}\right)^{1/p}\,,$$
we see from Lemma \ref{norm} that the two factors are independent.
From the additive property of Gamma distribution,  $\Vert G^{(N)}\Vert_p^p$
is  $\gamma(Np^{-1})$ distributed. So that, we can apply (\ref{10}) and get
\begin{equation*}
  \frac{\Vert G^{(N)}\Vert_p^p}{\Vert G^{(N)}\Vert_p^p + Z} \stackrel{d}{=}\ \mathrm{Beta}(Np^{-1}, 1)\,,
\end{equation*}
and taking the power $1/p$, this is exactly the required radial distribution (see Lemma~\ref{xixi0} point 2.).

Recently, various authors (\cite{Barthe1}, \cite{Naorseul}) were interested in the dependence structure of coordinates when sampling randomly in the unit ball or on the sphere. In this section, we give direct proofs of these results, carrying the known properties of Dirichlet distributions. In Lemma 2 of \cite{Naorseul}  (with references to analytical and geometric proofs in older papers),  Naor proved that the coordinates of a random sampling on the sphere are
negatively upper orthant dependent. To be more precise, let us recall two definitions (\cite{joagdev1983nar}).

\begin{definition}[Joag-Dev, Proschan]
\label{NAdef}
\begin{enumerate}
\item Random variables $U_1, \cdots, U_k$ are said to be negatively associated (NA) if
for every pair of disjoint subsets $A_1, A_2$ of $\{1, \cdots, k\}$
\begin{equation*}
    \hbox{Cov}\{ f_1(U_i, i \in A_1) , f_2(U_j , j \in A_2)\} \leq 0\,.
\end{equation*}
whenever $f_1$ and $f_2$ are increasing.
\item
Random variables $U_1, \cdots, U_k$ are said to be  negatively upper orthant dependent (NUOD), if
for all real $x_1, \cdots, x_k$,
\begin{equation*}
\label{NUODP}
     \mathbb{P}(U_i > x_i , i = 1, \cdots, k) \leq \prod_{i=1}^k \mathbb{P}(U_i > x_i)\,.
\end{equation*}
or equivalently, if
\begin{equation*}
\label{NUODF}
\mathbb{E}\left(\prod_{i = 1}^k f_i(U_i) \right) \leq \prod_{i=1}^k \mathbb{E}(f_i(U_i))\,.
\end{equation*}
for $f_1, \cdots, f_n$ increasing positive functions.
\end{enumerate}
\end{definition}
Some authors called property (\ref{NUODF}) sub-independence. NA is strictly stronger than NUOD.
\begin{proposition}
\label{DirNA}
If $\xi = (\xi_1, \cdots ,\xi_n)$ follows a Dirichlet distribution on $\mathcal{S}_n^<$ or $\mathcal{S}_n$, then the variables $\xi_1, \cdots ,\xi_n$ are NA.
\end{proposition}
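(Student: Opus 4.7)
The plan is to reduce the $n$-variable NA property to a one-dimensional covariance inequality via the grouping and quotient identities \eqref{grouping}, \eqref{quot} and the Gamma representation \eqref{10}.

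First, I fix disjoint $A_1,A_2\subset\{1,\dots,n\}$ (augmenting with the slack coordinate $\xi_{n+1}=1-\sum_{j\le n}\xi_j$ in the $\mathcal{S}_n^<$ case) and set $A_3$ to be the remaining indices. Writing $\xi_j=Y_j/S$ with independent $Y_j\sim\gamma(a_j)$ and $S=\sum_j Y_j$, I put $T_k:=\sum_{j\in A_k}\xi_j$ and $A_k^*:=\sum_{j\in A_k}a_j$, and consider the renormalised blocks
\begin{equation*}
\xi^{(k)}:=(\xi_j/T_k)_{j\in A_k}=\Bigl(Y_j\Big/\textstyle\sum_{j'\in A_k}Y_{j'}\Bigr)_{j\in A_k}.
\end{equation*}
By \eqref{10} each $\xi^{(k)}$ is $\mathrm{Dir}((a_j)_{j\in A_k})$ and is independent of $\sum_{j\in A_k}Y_j$; since it also depends only on the disjoint block $(Y_j)_{j\in A_k}$, the four objects $\xi^{(1)},\xi^{(2)},\xi^{(3)}$ and the weight vector $(T_1,T_2,T_3)\sim\mathrm{Dir}(A_1^*,A_2^*,A_3^*)$ (from \eqref{grouping}) are mutually independent.

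Second, for coordinatewise increasing $f_1,f_2$, since $\xi_j=T_k\,\xi^{(k)}_j$ with $\xi^{(k)}\ge 0$, the function
\begin{equation*}
\widetilde g_k(t):=\mathbb E\bigl[f_k\bigl((t\,\xi^{(k)}_j)_{j\in A_k}\bigr)\bigr]
\end{equation*}
is nondecreasing on $[0,1]$ and equals the conditional expectation $\mathbb E[f_k((\xi_j)_{j\in A_k})\mid T_k=t]$. Conditional independence of $\xi^{(1)}$ and $\xi^{(2)}$ given the $T$'s (itself a consequence of step one) yields $\mathbb E[f_1 f_2]=\mathbb E[\widetilde g_1(T_1)\widetilde g_2(T_2)]$ and $\mathbb E[f_k]=\mathbb E[\widetilde g_k(T_k)]$, so the proof reduces to showing $\mathrm{Cov}(\widetilde g_1(T_1),\widetilde g_2(T_2))\le 0$.

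Third, I invoke the stick-breaking of the three-dimensional Dirichlet $(T_1,T_2,T_3)$ (a direct consequence of \eqref{quot}): the conditional law of $T_2$ given $T_1$ is that of $(1-T_1)B$ with $B\sim\mathrm{Beta}(A_2^*,A_3^*)$ independent of $T_1$ (with the convention $T_2=1-T_1$ if $A_3=\emptyset$). Hence $\mathbb E[\widetilde g_2(T_2)\mid T_1]=h(1-T_1)$ where $h(u):=\mathbb E[\widetilde g_2(uB)]$ is nondecreasing in $u\in[0,1]$. So $\mathbb E[\widetilde g_2(T_2)\mid T_1]$ is a nonincreasing function of the single real variable $T_1$ while $\widetilde g_1(T_1)$ is nondecreasing, and Chebyshev's covariance inequality for monotone functions of one variable gives $\mathrm{Cov}(\widetilde g_1(T_1),\widetilde g_2(T_2))\le 0$. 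The main subtlety is the joint independence asserted in step one; it is cleanly supplied by the Gamma representation, which displays each $\xi^{(k)}$ as a function of a disjoint block of the $Y_j$'s alone.
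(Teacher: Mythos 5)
Your proof is correct and follows essentially the same route as the paper: condition on the Dirichlet block sums, reduce to a covariance inequality between a nondecreasing and a nonincreasing function of $T_1$, and conclude with the univariate Chebyshev correlation inequality. The only cosmetic differences are that the paper first eliminates the unused coordinates $A_3$ (absorbing them into the slack index) and invokes the conditional Dirichlet structure \eqref{quot} directly, whereas you retain $A_3$ and obtain the joint independence via the Gamma representation \eqref{10}.
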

This property is claimed for Dirichlet distributions (without precision) in (\cite{joagdev1983nar}).
Nevertheless, for the sake of completeness, we present a new  short proof here.

\begin{proof}
%It is clear that the Dirichlet distribution on $\mathcal{S}_n^<$ is a projection of a Dirichlet distribution on $\mathcal{S}_n$, so we only consider this case. 
In a Dirichlet distribution the permutation of variables or the elimination of few of them yields still a Dirichlet distribution, thus it is enough to show that if $U \stackrel{d}{=}\ \hbox{Dir}_n(a_1, \cdots, a_n; a_{n+1})$ then  for $r <  n$ and, $f$ and $g$ increasing functions
\begin{equation*}
   \mathbb{E} \left[f(U_1, \cdots, U_r)g(U_{r+1}, \cdots, U_s)\right] \leq \mathbb{E} f(U_1, \cdots, U_r) \mathbb{E} g(U_{r+1}, \cdots, U_n)\,.
\end{equation*}
Conditioning by $S = U_1 + \cdots + U_k$ and using (\ref{quot})
it follows
\begin{equation*}
   \mathbb{E} \left[f(U_1, \cdots, U_r) g(U_{r+1}, \cdots, U_n)\right] = \mathbb{E} F(S) G(S)
\end{equation*}
with $F(S) = \mathbb{E}^S f(U_1, \cdots, U_r)$  and   $G(S) = \mathbb{E}^S g(U_{r+1}, \cdots, U_n)$ (here $\mathbb{E}^S$ denotes the conditional expectation with respect to $S$).
If we write $U$ as
\begin{equation*}
   U= \left(S\xi_1, \cdots, S\xi_r, (1-S) \xi_{r+1}, \cdots, (1-S)\xi_n\right)
\end{equation*}
it holds
\begin{eqnarray*}(\xi_1, \cdots, \xi_r) &\stackrel{d}{=}\ & \hbox{Dir}(a_1, \cdots , a_r)\\
 (\xi_{r+1}, \cdots, \xi_n) &\stackrel{d}{=}\ & \hbox{Dir}_{n-r}(a_{r+1}, \cdots , a_n; a_{n+1})\\
 S &\stackrel{d}{=}\ & \mathrm{Beta}(a_1 + \cdots + a_r ; a_{r+1}+ \cdots+ a_{n+1})
\end{eqnarray*}
and these three variables are independent. It follows then $F(s) = \mathbb{E} f(s\xi_1, \cdots, s\xi_r)$ and $G(s) =  \mathbb{E} g((1-s)\xi_1, \cdots, (1-s)\xi_r)$. The function $F$ is increasing and $G$ is decreasing, then applying the usual correlation inequality for a single variable yields
\begin{equation*}
\mathbb{E} F(S)G(S) \leq \mathbb{E} F(S) \mathbb{E} G(S)
\end{equation*}
and we obtain  $\mathbb{E} F(S) = \mathbb{E} f(U_1, \cdots, U_r)$ and as $1-S \stackrel{d}{=}\ \mathrm{Beta}(a_{k+1}+ \cdots+ a_{n+1} , a_1 + \cdots + a_r)$ we recover by product $\mathbb{E} G(S) = \mathbb{E} g(U_{k+1}, \cdots, U_n)$.
\end{proof}

In \cite{Perissinaki} and \cite{Barthe1} (see Theorem 6 and Lemma 5), the NUOD property was proved for the coordinates of a random sampling in the ball. Actually we can revisit these statements for a larger class of distributions and extend them to the NA property.

\begin{theorem}
\label{NAth}
\begin{enumerate}
\item If $X =(X_1, \cdots X_N)$ has the density of \eqref{defdensH} in the ball $\mathbb{B}_{N,p}$, then the variables $|X_1|, \cdots, |X_N|$ are NA.
\item If $X=(X_1, \cdots X_N)$ has the density
\begin{equation*}
  K(x) := \frac{1}{\mathcal{Z}_{\mathbf a}} \prod_{j=1}^{N} |x_j|^{pa_j -1}\,,
\end{equation*}
on the sphere $\partial\mathbb{B}_{N,p}$ (where $\mathcal{Z}_{\mathbf a}$ is the normalizing constant), then the variables $|X_1|, \cdots, |X_N|$ are NA.
\end{enumerate}
\end{theorem}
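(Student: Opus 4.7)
The strategy for both statements is to reduce to Proposition \ref{DirNA} via the Dirichlet representation of the radial vector $(|X_j|^p)_{j=1}^N$, using the classical fact that negative association is preserved under coordinate-wise nondecreasing transformations. This preservation property follows directly from Definition \ref{NAdef}: if $f_1,\ldots,f_N$ are increasing, then any increasing function of $(f_{i_1}(U_{i_1}),\ldots,f_{i_r}(U_{i_r}))$ is again an increasing function of $(U_{i_1},\ldots,U_{i_r})$, so the NA inequality transfers (see also \cite{joagdev1983nar}).

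For part 1, the density \eqref{defdensH} is exactly the $p$-Generalized Dirichlet density in the case where relation \eqref{relab} holds. Definition \ref{deden} then yields the representation $(X_1,\ldots,X_N)\stackrel{d}{=}(\varepsilon_1\rho_1^{1/p},\ldots,\varepsilon_N\rho_N^{1/p})$ where $(\rho_1,\ldots,\rho_N)\stackrel{d}{=}\hbox{Dir}_N(a_1,\ldots,a_N;b_N)$ and the Rademacher signs are independent of $\rho$. In particular $|X_j|=\rho_j^{1/p}$. By Proposition \ref{DirNA} the vector $\rho$ is NA, and applying the coordinate-wise increasing map $t\mapsto t^{1/p}$ preserves NA, so $(|X_1|,\ldots,|X_N|)$ is NA.

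For part 2, the same strategy works once the distribution of $(|X_1|^p,\ldots,|X_N|^p)$ on $\mathcal{S}_N$ is identified. I would proceed via the Gamma construction: let $g_1,\ldots,g_N$ be independent with $g_j\stackrel{d}{=}\gamma(a_j)$ and let $\varepsilon_1,\ldots,\varepsilon_N$ be independent Rademacher, independent of the $g_j$'s, and set $Y_j := \varepsilon_j\, g_j^{1/p}(g_1+\cdots+g_N)^{-1/p}$. Then $Y\in\partial\mathbb{B}_{N,p}$. A short change of variables, using that the cone measure $\mu_B$ pushes forward to $\hbox{Dir}_N(p^{-1})$ under $x\mapsto(|x_1|^p,\ldots,|x_N|^p)$, shows that the density of $Y$ with respect to $\mu_B$ is proportional to $\prod_j \rho_j^{a_j-1/p}=\prod_j|y_j|^{pa_j-1}$, i.e.\ $Y\stackrel{d}{=}X$. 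Meanwhile \eqref{10} gives $(g_j/\sum_k g_k)_{j=1}^N\stackrel{d}{=}\hbox{Dir}_N(a_1,\ldots,a_N)$ on $\mathcal{S}_N$, so $(|X_1|^p,\ldots,|X_N|^p)$ is Dirichlet distributed on the simplex. Proposition \ref{DirNA} combined with the increasing map $t\mapsto t^{1/p}$ then concludes.

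The only step requiring genuine care is the pushforward identification in part 2, because the reference measure on $\partial\mathbb{B}_{N,p}$ for the density $K$ must be pinned down (the cone measure $\mu_B$ is the natural choice). Once this is settled, the whole theorem is a clean combination of the Gamma--Dirichlet identity \eqref{10}, the NA of Dirichlet vectors (Proposition \ref{DirNA}), and the preservation of NA under coordinate-wise monotone maps.
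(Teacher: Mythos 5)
Your proof is correct and follows essentially the same route as the paper: reduce $(|X_j|^p)_j$ to a Dirichlet vector (via $\mathrm{Dir}_N(a_1,\ldots,a_N;b_N)$ in the ball case, $\mathrm{Dir}(a_1,\ldots,a_N)$ in the sphere case), invoke Proposition~\ref{DirNA}, and transfer NA through the coordinatewise increasing map $t\mapsto t^{1/p}$. The only difference is cosmetic: the paper identifies the Dirichlet law by a direct change of variables in the density, while you derive the sphere case via the Gamma representation and the pushforward of the cone measure, which is a slightly more detailed but equivalent justification of the same identification.
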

Let us stress that the uniform distribution in the ball (resp. on the sphere) satisfies the assumptions of the last theorem.
\medskip

\begin{proof}
Let $\xi_i = |X_i|^p$ for $i= 1, \cdots, N$ and $\xi = (\xi_1, \cdots, \xi_N)$.

1) When $X$ has the density  $H_{{\mathbf a},{\mathbf b}}$ of (\ref{defdensH}), a small change of variables yields
\begin{equation*}
   \xi \stackrel{d}{=}\ \hbox{Dir}_N(a_1, \cdots, a_N ; b_N).
\end{equation*}
We saw at  the end of Section \ref{direlem} that the variables $|\xi_i|$ are NA, and then the variables $|X_i|$ inherit the property.

2) We have
\begin{equation*}
(\xi_1 , \cdots, \xi_N) \stackrel{d}{=}\ \hbox{Dir}(a_1, \cdots, a_N)
\end{equation*}
and we get the same conclusion as above.
\end{proof}

\section{Asymptotics for $p$-generalized Dirichlet distributions}
\label{sec:asymptotics}
In this section, we use the previous representations to obtain asymptotic results (as $N$ goes to infinity) for the finite dimensional projections of random element of $B_{N,p}$.  We consider both convergence in distribution and large deviations.
\subsection{Convergence in distribution revisited}

\subsubsection{Poincar\'e-Borel for the uniform distribution on the ball}

With the representation \eqref{repres2} we get easily the Poincar\'e-Borel like lemma (see Lemma 1.2 of \cite{Ledoux} and  \cite{Diac} for historical account). As before,  $(g_k)_{k\geq 1}$ is a sequence of independent ${\bf G}_p$ distributed random variables and $G^{(N)} = (g_1, \cdots, g_N)$. For every $N \geq 1$, set
\begin{equation*}
   \phi^{(N)} = \frac{G^{(N)}}{\Vert G^{(N)}\Vert_p}\,.
\end{equation*}
Now, if $\eta^{(N)} := (\eta_1, \cdots, \eta_N)$ is uniformly distributed on the  $\ell_p$ sphere $\partial \mathbb{B}_{N,p}$, we have
\begin{equation*}
\label{quot2}
        N^{\frac{1}{p}} \eta^{(N)}\stackrel{d}{=}\ N^{\frac{1}{p}} \phi^{(N)} = \left(\frac {N}{\sum_{k=1}^N |g_k|^p}\right)^{1/p}  G^{(N)}\,.
\end{equation*}

If $\xi^{(N)} := (\xi_1, \cdots, \xi_N)$ is uniformly distributed in the ball $\mathbb{B}_{N,p}$, we have
\begin{equation*}
\label{quot3} N^{\frac{1}{p}} \xi^{(N)}
   \stackrel{d}{=}\ U^{1/N} N^{\frac{1}{p}} \phi^{(N)}
   \,.
\end{equation*}

By the strong law of large numbers,
\begin{equation*}
   \left(\frac {N}{\sum_{k=1}^N |g_k|^p}\right)^{1/p} \xrightarrow[N]{a.s.} \left(\mathbb{E}|g_1|^p\right)^{-1/p}= p^{1/p}.
\end{equation*}

Moreover $U^{N}$ converges to $1$ in distribution.
This yields a $p$-version of the classical form of the Poincar\'e-Borel's lemma:
\begin{proposition}\label{poincare.p.version}
If $k$ is fixed, and if $\pi^{(k)}$ denotes the projection on the $k$ first coordinates, then, as $N \rightarrow \infty$,
\begin{eqnarray*}
   \pi^{(k)}(N^{\frac{1}{p}} \eta^{(N)}) \xrightarrow{(d)} {\bf G}_p^{\otimes k},\\
   \pi^{(k)}(N^{\frac{1}{p}} \xi^{(N)}) \xrightarrow{(d)} {\bf G}_p^{\otimes k}.
\end{eqnarray*}
\end{proposition}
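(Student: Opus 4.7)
The plan is to apply Slutsky's theorem directly to the two distributional identities established in the paragraphs immediately preceding the statement. In both cases, once we fix the number $k$ of projected coordinates, the first $k$ entries of $G^{(N)}$ are simply $(g_1,\dots,g_k)$ and do not depend on $N$, while the renormalizing scalar converges almost surely to a deterministic limit.

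For the sphere assertion, project the identity $N^{1/p}\eta^{(N)} \stackrel{d}{=} (N/\|G^{(N)}\|_p^p)^{1/p}\,G^{(N)}$ onto its first $k$ coordinates, which gives
\[
\pi^{(k)}\bigl(N^{1/p}\eta^{(N)}\bigr) \stackrel{d}{=} \Bigl(\frac{N}{\|G^{(N)}\|_p^p}\Bigr)^{1/p}(g_1,\dots,g_k).
\]
The vector $(g_1,\dots,g_k)$ has law ${\mathbf G}_p^{\otimes k}$ and is fixed in $N$; the scalar prefactor converges almost surely to a positive constant by the strong law of large numbers applied to the i.i.d.\ sequence $(|g_j|^p)_{j\ge 1}$, exactly the computation recorded in the paragraph just above the statement. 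Slutsky's theorem then delivers the desired weak convergence of the $k$-dimensional marginal.

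For the ball assertion, I would combine the sphere result with the identity $N^{1/p}\xi^{(N)} \stackrel{d}{=} U^{1/N}\,N^{1/p}\phi^{(N)}$, where $U$ is uniform on $[0,1]$ and independent of $(g_j)_{j\ge 1}$. Since $\log U^{1/N} = N^{-1}\log U \to 0$ almost surely, the extra factor $U^{1/N}$ tends to $1$ almost surely, so a second application of Slutsky's theorem reduces this case to the sphere case.

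No serious obstacle appears: the argument is a bookkeeping exercise once the two product representations and the strong law estimate are in hand. The point worth highlighting is that projecting onto a fixed number of coordinates \emph{freezes} the corresponding block of $G^{(N)}$ as an $N$-independent ${\mathbf G}_p^{\otimes k}$ vector, so that the only $N$-dependent ingredient is a scalar (built from $\|G^{(N)}\|_p$ and possibly the independent uniform factor), which concentrates to a constant. Everything else is Slutsky.
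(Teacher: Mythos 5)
Your argument is essentially identical to the paper's: project the two product representations $N^{1/p}\eta^{(N)} \stackrel{d}{=} (N/\|G^{(N)}\|_p^p)^{1/p}G^{(N)}$ and $N^{1/p}\xi^{(N)} \stackrel{d}{=} U^{1/N}N^{1/p}\phi^{(N)}$ onto the first $k$ coordinates, apply the strong law of large numbers to the scalar prefactor, note $U^{1/N}\to 1$, and invoke Slutsky (which the paper uses implicitly). One small wrinkle worth flagging, which the paper shares: the SLLN limit computed in the preceding paragraph is $\bigl(\mathbb{E}|g_1|^p\bigr)^{-1/p}=p^{1/p}$, not $1$, so the argument as written actually yields convergence to $p^{1/p}\,{\mathbf G}_p^{\otimes k}$ under the paper's normalization of ${\mathbf G}_p$ (density $\propto e^{-|x|^p}$); the stated limit ${\mathbf G}_p^{\otimes k}$ would require either replacing $N^{1/p}$ by $(N/p)^{1/p}$ or taking the density $\propto e^{-|x|^p/p}$, but this is a normalization discrepancy in the source, not a gap in your method.
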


\subsubsection{Poincar\'e-Borel for generalized Dirichlet distributions}
\label{susePoing}
Let $X$ and $C$ be as in Proposition \ref{proppropren} and $k$ be a fixed positive integer. We will first give a result on the
asymptotic behavior, for large $N$, of $(N^{1/p}X^{(k)})$ ($k>0$ is fixed). Our proof uses the canonical representation of the ball and is quite simple.

\begin{theorem}
Let $k \geq 1$ be fixed. Assume that, for $j=1,\cdots,k$, $p b_j = N + o(N)$.
Then,
\begin{equation*}
   N^{1/p}X^{(k)} \xrightarrow[N]{(d)} \left(\varepsilon_1 Z_1^{1/p}, \cdots , \varepsilon_k Z_k^{1/p}\right)
\end{equation*}
where $\varepsilon_1, \cdots, \varepsilon_k, Z_1, \cdots, Z_k$ are independent, the $\varepsilon$'s are Rademacher distributed and for $j=1, \cdots, k$
\begin{equation*}
    Z_j\stackrel{d}{=}\ \gamma\left(a_j, \frac{1}{p}\right)\,.
\end{equation*} In other words $\varepsilon_j Z_j^{1/p}$ has the density
\begin{equation*}
    \mathbb{P}(\varepsilon_j Z_j^{1/p} \in dx)=\frac{p^{1-a_j}}{2\Gamma(a_j)}|x|^{pa_j -1}
    \exp(-{|x|^p}/{p})dx,\;\;(x\in\mathbb{R}).
\end{equation*}
\label{tpoing}
\end{theorem}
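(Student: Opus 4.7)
The strategy is to pull back the problem from the $X$-coordinates to the canonical coordinates $C$, where Proposition~\ref{proppropren} gives us independence and an explicit Beta density, then push forward through the triangular map \eqref{ztopp} and verify that the correction factors degenerate to $1$.

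First, invoke Proposition~\ref{proppropren} to write $C_j \stackrel{d}{=}\ \varepsilon_j Z_j^{1/p}$ with $Z_j \sim \mathrm{Beta}(a_j,b_j)$, all the $\varepsilon_j$'s and $Z_j$'s mutually independent. The key analytic input is the classical convergence of rescaled Beta variables to Gamma: if $Y \sim \mathrm{Beta}(a,b)$ with $b \to \infty$, then $bY \xrightarrow{(d)} \gamma(a,1)$, which follows at once from the density computation using $\Gamma(a+b)/\Gamma(b) \sim b^a$ and $(1-y/b)^{b-1} \to e^{-y}$. Under the hypothesis $pb_j = N + o(N)$, this yields $N Z_j \xrightarrow{(d)} \gamma(a_j, 1/p)$ for each fixed $j \leq k$, and, by independence of the $C_j$'s, the joint convergence
\begin{equation*}
\bigl(N^{1/p} C_1, \ldots, N^{1/p} C_k\bigr) = \bigl(\varepsilon_1 (NZ_1)^{1/p}, \ldots, \varepsilon_k (NZ_k)^{1/p}\bigr) \xrightarrow{(d)} \bigl(\varepsilon_1 \tilde Z_1^{1/p}, \ldots, \varepsilon_k \tilde Z_k^{1/p}\bigr),
\end{equation*}
where $\tilde Z_j \sim \gamma(a_j, 1/p)$ are independent and independent of the $\varepsilon$'s.

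Next, recall from \eqref{ztopp} that
\begin{equation*}
N^{1/p} X_j \;=\; \bigl(N^{1/p} C_j\bigr) \prod_{l=1}^{j-1} \bigl(1 - |C_l|^p\bigr)^{1/p}, \qquad j = 1, \ldots, k.
\end{equation*}
Since $|C_l|^p = Z_l$ and $N Z_l$ converges in distribution, we have $Z_l \to 0$ in probability, so each correction factor $(1-|C_l|^p)^{1/p}$ tends to $1$ in probability as $N \to \infty$. A joint application of Slutsky's lemma (or the continuous mapping theorem applied to the map that sends $(N^{1/p}C_j, NZ_l)_{j,l \leq k}$ to $(N^{1/p}X_j)_{j\leq k}$, which is continuous at the limit points where the second block vanishes) gives
\begin{equation*}
N^{1/p} X^{(k)} \xrightarrow{(d)} \bigl(\varepsilon_1 \tilde Z_1^{1/p}, \ldots, \varepsilon_k \tilde Z_k^{1/p}\bigr),
\end{equation*}
which is the claim. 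The stated density for $\varepsilon_j \tilde Z_j^{1/p}$ follows by the change of variables $t = w^{1/p}$ applied to the $\gamma(a_j, 1/p)$ density $\frac{w^{a_j-1}}{p^{a_j}\Gamma(a_j)}e^{-w/p}$ and symmetrization by $\varepsilon_j$.

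The only nontrivial step is the Beta-to-Gamma limit with a drifting first parameter: $a_j$ is fixed while $b_j \sim N/p$, and one must confirm that the density convergence holds uniformly enough on compact sets to imply weak convergence; this is standard (Scheffé, or direct characteristic function computation), so no real obstacle arises. Everything else is essentially algebra plus Slutsky, and independence of the $C_j$'s — the structural payoff of the canonical coordinate change — is what makes the joint limit immediate.
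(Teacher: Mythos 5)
Your proof is correct and matches the paper's argument essentially step for step: pass to the canonical coordinates via Proposition~\ref{proppropren}, apply the Beta-to-Gamma limit (the paper's unnumbered lemma preceding the theorem) under the scaling $pb_j = N + o(N)$, use independence for the joint limit of $N^{1/p}C^{(k)}$, and then observe via \eqref{ztopp} (equivalently \eqref{unun}) that $N^{1/p}X^{(k)} = N^{1/p}C^{(k)} + o_{\mathbb{P}}(1)$ to conclude by Slutsky. The only difference is that you spell out the Beta-to-Gamma and Slutsky details that the paper leaves implicit.
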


The proof is a straightforward consequence of the following useful lemma, whose proof follows the representation (\ref{10}) and the law of large numbers.
\begin{lemma}
Let $\theta > 0$ and $c(\theta)$ such that, as $\theta \rightarrow \infty$,
$\lim \frac{c(\theta)}{\theta}  = c > 0$. Then, for every $a > 0$,
\begin{equation*}
   \label{betainfini}
   \mathrm{Beta}(a, c(\theta))  \xrightarrow[\theta \rightarrow \infty]{~(d)~} \gamma(a, c)\,.
\end{equation*}
\end{lemma}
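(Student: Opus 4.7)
The plan is to apply the Gamma--ratio representation~\eqref{10} and then invoke a law of large numbers for the Gamma variable in the denominator; this is exactly the hint given in the text. One should first note that the statement is implicitly about the rescaled variable $\theta\, B_\theta$ where $B_\theta\sim\mathrm{Beta}(a,c(\theta))$: this matches the application in Theorem~\ref{tpoing}, where $NZ_j$ (rather than $Z_j$ itself) appears, and it is the only reading under which the conclusion is non-degenerate, since $B_\theta\to 0$ in probability whenever $c(\theta)\to\infty$.

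First I would realize the Beta as a Gamma ratio: by \eqref{10} applied with $r=2$, $b_1=a$, $b_2=c(\theta)$,
\[
B_\theta \;\stackrel{d}{=}\; \frac{Y}{Y+W_\theta},
\]
with $Y\sim\gamma(a)$ and $W_\theta\sim\gamma(c(\theta))$ independent. Next, a Chebyshev bound on $W_\theta$ (whose mean and variance both equal $c(\theta)\to\infty$) gives $W_\theta/c(\theta)\to 1$ in probability, and combined with the hypothesis $c(\theta)/\theta\to c$ and the trivial estimate $Y/\theta\to 0$, this yields $(Y+W_\theta)/\theta \to c$ in probability.

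Finally, Slutsky's theorem applied to the decomposition
\[
\theta\, B_\theta \;=\; Y\cdot\frac{\theta}{Y+W_\theta}
\]
gives $\theta B_\theta \xrightarrow{(d)} Y/c$, and a one-line change of variables on the density $h_{a,1}$ identifies $Y/c$ with the $\gamma(a,c)$ law. The argument is essentially an LLN-plus-Slutsky, so there is no serious obstacle; the only mildly delicate point is verifying the LLN for $W_\theta$ when $c(\theta)$ is not integer-valued, but the Chebyshev bound handles that uniformly without appeal to any sum-of-exponentials representation.
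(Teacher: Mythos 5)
Your proof is correct and implements exactly the route the paper indicates (the paper gives no detailed proof, only the hint that it ``follows the representation \eqref{10} and the law of large numbers''): realize the Beta law as the ratio $Y/(Y+W_\theta)$ of independent Gammas, apply a law-of-large-numbers / Chebyshev argument to get $(Y+W_\theta)/\theta \to c$ in probability, and conclude by Slutsky together with the scaling identity $Y/c\sim\gamma(a,c)$. Your preliminary observation that the statement must be read as concerning $\theta B_\theta$ rather than $B_\theta$ itself (which would converge to $\delta_0$) is a valid and necessary clarification, and it matches the way the lemma is actually invoked in the proof of Theorem~\ref{tpoing}.
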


\begin{proof}[Proof of Theorem~\ref{tpoing}]
The above lemma yields for $j \leq k$
\begin{equation*}
    N^{1/p}C_j \xrightarrow[n]{~(d)~} Z_j\sim\gamma(a_j ,1/p)\,,
\end{equation*}
and by independence we get the convergence in distribution of $N^{1/p} C^{(k)}$.
Further using \eqref{unun} we may write $N^{1/p} X^{(k)} = N^{1/p} C^{(k)}+o_\mathbb{P}(1)$ and may conclude.
\end{proof}

\subsection{Large deviations}
Let us turn out to the large deviation companion theorem of Theorem \ref{tpoing}. For the basic notions on large deviations,
(definitions, contraction principle...) we refer to \cite{DZ}.
\begin{theorem}
\label{tgv} Under the assumption of Theorem \ref{tpoing}, $(X^{(k)})$ satisfies a large deviations principle (LDP) with
good rate function
\begin{equation*}
    I(x) = -\frac{1}{p} \ln\left(1-\|x\|_p^p\right),\;\;(x\in\mathbb{B}_{k,p}).
\end{equation*}
\end{theorem}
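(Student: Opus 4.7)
The plan is to mirror the proof of Theorem~\ref{tpoing}: lift to the canonical coordinates $C^{(k)}=(C_1,\ldots,C_k)$, which are independent by Proposition~\ref{proppropren}, establish an LDP at speed $N$ (the natural scale implicit in the statement) on that level where independence trivializes it, and then transport the result through the $N$-independent triangular reconstruction map~\eqref{ztopp}.

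First I would handle a single $C_j$. By Proposition~\ref{proppropren}, its density reads
\[
f_j^{(N)}(c) = \frac{p\,\Gamma(a_j+b_j)}{2\,\Gamma(a_j)\,\Gamma(b_j)}\,|c|^{pa_j-1}(1-|c|^p)^{b_j-1}\mathbf{1}_{(-1,1)}(c).
\]
Under the hypothesis $pb_j=N+o(N)$, Stirling's formula yields that the prefactor grows only like $b_j^{a_j}$ and therefore contributes $\exp(o(N))$; the factor $|c|^{pa_j-1}$ is likewise $\exp(o(N))$ uniformly on compact subsets of $(-1,1)$; and the decisive factor satisfies
\[
\frac{1}{N}\ln(1-|c|^p)^{b_j-1} \;\longrightarrow\; \frac{1}{p}\ln(1-|c|^p).
\]
A standard Laplace-type argument then yields an LDP at speed $N$ for $C_j$ with good rate function $J(c)=-\frac{1}{p}\ln(1-|c|^p)$ on $(-1,1)$ (and $+\infty$ elsewhere); its level sets are compact since $J$ blows up at the boundary. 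By independence of $C_1,\ldots,C_k$, the vector $C^{(k)}$ then satisfies an LDP at speed $N$ with rate function $\sum_{l=1}^k J(c_l)$.

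Next I would apply the contraction principle through the map $\Phi:(-1,1)^k \to \mathbb{B}_{k,p}$ defined by $\Phi(c)_j=c_j\prod_{l<j}(1-|c_l|^p)^{1/p}$, so that $X^{(k)}=\Phi(C^{(k)})$ by~\eqref{ztopp}; this $\Phi$ is a continuous $N$-independent bijection (indeed a $C^1$-diffeomorphism, by the triangularity in Lemma~\ref{lemjacob}). To identify the rate function, I would use the stick-breaking telescoping identity
\[
1-\|\Phi(c)\|_p^p \;=\; \prod_{l=1}^k(1-|c_l|^p),
\]
verified by a one-line induction on $k$. Combined with the injectivity of $\Phi$, this turns $\sum_l J(c_l)$ into $-\frac{1}{p}\ln(1-\|\Phi(c)\|_p^p)$ and produces precisely the announced $I$, which is a good rate function on $\mathbb{B}_{k,p}$ (continuous on its domain and blowing up on the boundary, $+\infty$ outside).

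The main obstacle is the one-dimensional LDP of the second paragraph: one must carefully check that the possible singularity of $f_j^{(N)}$ at the origin (when $pa_j<1$) and the $N$-dependence of the normalizing constant contribute only subexponentially at speed $N$, and that the pointwise limit of $\frac{1}{N}\ln f_j^{(N)}$ is robust enough (e.g.\ locally uniform, or amenable to a Laplace-type inversion) to yield matching upper and lower bounds. Once this is in hand, the remaining steps (independence, contraction principle, stick-breaking identification) are routine.
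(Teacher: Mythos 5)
Your overall strategy coincides with the paper's: pass to the canonical coordinates $C^{(k)}$, exploit their independence (Proposition~\ref{proppropren}), establish a one-dimensional LDP for each $C_j$ at speed $N$, and push the result back through the $N$-independent, continuous reconstruction map via the contraction principle together with the telescoping identity $1-\|x\|_p^p=\prod_{l\le k}(1-|c_l|^p)$. Where you genuinely diverge is in the proof of the one-dimensional LDP. You propose a direct Laplace-type analysis of the density of $C_j$, controlling the normalizing constant via Stirling and the singular factor $|c|^{pa_j-1}$ separately, and extracting the rate function from the pointwise limit $\tfrac{1}{N}\ln f_j^{(N)}(c)\to \tfrac{1}{p}\ln(1-|c|^p)$; you correctly flag that the technical work lies in making the matching upper and lower bounds and exponential tightness rigorous (they do go through, since the state space $[-1,1]$ is compact and the tail mass near $\pm 1$ decays superexponentially). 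The paper instead isolates this step as Lemma~\ref{PGDbeta} and proves it by representing $\mathrm{Beta}(a,c(\theta))$ as $\gamma(a)/(\gamma(a)+\gamma'(c(\theta)))$, deriving LDPs for the rescaled Gamma factors, and applying the contraction principle through the continuous map $(x,y)\mapsto x/(x+y)$; the authors explicitly note that the ``direct computation'' you suggest already appears in the literature (Dawson--Feng, Feng), and that their Gamma route is chosen to avoid density estimates and to highlight the role of the exponential distribution. Both routes are correct; yours is the more classical and self-contained density argument, theirs is slicker and transfers the technical burden to standard facts about Gamma variables.
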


To prove this theorem we begin with the canonical variables.
\begin{proposition}
\label{LDPcanon}
Under the assumption of Theorem \ref{tpoing}, $(C^{(k)})$
satisfies a LDP with good rate function
\begin{equation*}
    J(c) = -\frac{1}{p} \ln\left( \prod_{i=1}^{k}(1-|c_i|^p)\right),\;\;
    c=(c_1,c_2,\cdots,c_k)\in (-1,1)^k.
\end{equation*}
\end{proposition}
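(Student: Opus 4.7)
The plan is to exploit the independence of the canonical coordinates under the $p$-generalized Dirichlet distribution (Proposition~\ref{proppropren}) and reduce the joint LDP to $k$ one-dimensional LDPs at speed $N$, which can then be combined by the standard independent-coordinates result (see e.g.~\cite{DZ}, Exercise 4.2.7). Since the ambient space $[-1,1]^k$ is compact, exponential tightness is automatic, so the only nontrivial task is to establish the LDP for each component $C_j$ separately and then add the rate functions.

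Concretely, I would first recall from Proposition~\ref{proppropren} that the $C_j$'s are independent and that $C_j$ has density
\begin{equation*}
    f_{N,j}(c) = \frac{p\,\Gamma(a_j+b_j)}{2\,\Gamma(a_j)\,\Gamma(b_j)}\,|c|^{pa_j-1}\bigl(1-|c|^p\bigr)^{b_j-1}\mathbf{1}_{(-1,1)}(c),
\end{equation*}
with $pb_j=N+o(N)$. I would then show the key asymptotic
\begin{equation*}
    \frac{1}{N}\ln f_{N,j}(c) \;\xrightarrow[N\to\infty]{}\; \frac{1}{p}\ln(1-|c|^p) = -J_j(c),
\end{equation*}
uniformly on compact subsets of $(-1,1)\setminus\{0\}$. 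The normalizing constant is handled via Stirling's asymptotics $\Gamma(a_j+b_j)/\Gamma(b_j)\sim b_j^{a_j}$, which is only polynomial in $N$ and therefore invisible at exponential scale; the factor $|c|^{pa_j-1}$ is also sub-exponential on compacts away from the origin.

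Next I would derive the LDP at speed $N$ for $C_j$ as follows. Since $|c|\mapsto(1-|c|^p)^{b_j-1}$ is decreasing in $|c|$ and $J_j$ is increasing in $|c|$, for any closed $F\subset[-1,1]$,
\begin{equation*}
    \mathbb{P}(C_j\in F) \leq \frac{p\,\Gamma(a_j+b_j)}{2\,\Gamma(a_j)\,\Gamma(b_j)}\,\Bigl(\sup_{c\in F}(1-|c|^p)\Bigr)^{b_j-1}\int_{-1}^{1}|c|^{pa_j-1}\,dc,
\end{equation*}
and taking $\frac{1}{N}\log$ and using $pb_j/N\to 1$ gives the upper bound $\limsup \frac{1}{N}\ln\mathbb{P}(C_j\in F)\leq -\inf_{c\in F}J_j(c)$. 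For the lower bound, given an open $G$ and $c_0\in G$ with $J_j(c_0)<\infty$ (hence $|c_0|<1$), I would integrate $f_{N,j}$ over a small interval $B(c_0,\delta)\subset G$, estimate $(1-|c|^p)^{b_j-1}$ from below by $(1-(|c_0|+\delta)^p)^{b_j-1}$, and use continuity of $J_j$ at $c_0$ as $\delta\downarrow 0$ to conclude $\liminf\frac{1}{N}\ln\mathbb{P}(C_j\in G)\geq -J_j(c_0)$. The case $c_0=0$ requires a small twist because the prefactor $|c|^{pa_j-1}$ may be unbounded, but integrating it over $B(0,\delta)$ yields a finite positive constant independent of $N$, which is harmless at exponential scale.

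Finally, I would combine: since $C_1,\ldots,C_k$ are independent and each $C_j$ satisfies an LDP at speed $N$ with good rate function $J_j$, the vector $C^{(k)}$ satisfies an LDP on $[-1,1]^k$ at speed $N$ with rate function $\sum_{j=1}^k J_j(c_j) = -\frac{1}{p}\ln\prod_{j=1}^k(1-|c_j|^p) = J(c)$. Lower semicontinuity of $J$ on the compact set $[-1,1]^k$ (with $J\equiv+\infty$ on the boundary) gives that $J$ is good. The main obstacle I anticipate is the bookkeeping at the boundary $|c_j|=1$ and at $c_j=0$, where the density either vanishes or blows up; but in both cases the behaviour is sub-exponential in $N$ and does not affect the large deviations speed.
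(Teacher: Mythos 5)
Your argument is correct, but it takes a genuinely different route from the paper's. You prove the one-dimensional LDP for each $C_j$ by a direct Laplace-type estimate on the explicit Beta-type density: Stirling asymptotics show that the normalizing constant $\Gamma(a_j+b_j)/(\Gamma(a_j)\Gamma(b_j))$ and the factor $|c|^{pa_j-1}$ are sub-exponential, so that $\tfrac{1}{N}\log f_{N,j}(c)\to\tfrac{1}{p}\log(1-|c|^p)$, and the upper/lower bounds follow by bounding $(1-|c|^p)^{b_j-1}$ over closed sets and over small balls. The paper instead works with $Z_j=|C_j|^p\sim\mathrm{Beta}(a_j,b_j)$ and deduces its LDP from Lemma~\ref{PGDbeta}, which is itself proved not by a density computation but via the representation $Z_j\stackrel{d}{=}\gamma(a)/(\gamma(a)+\gamma'(b_j))$, LDPs for the scaled Gamma factors, and the contraction principle under $(x,y)\mapsto x/(x+y)$; the power-and-sign map then transfers the LDP to $C_j$, and independence combines the components exactly as you do. Your approach is more self-contained and elementary; the paper's approach isolates Lemma~\ref{PGDbeta} as a reusable block (it is invoked again in the discussion of the GEM models) and, as the authors note, highlights the structural role of the Gamma/exponential representation behind Beta LDPs rather than brute-force asymptotics. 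One small point worth making explicit in your write-up: the monotonicity bound $(1-|c|^p)^{b_j-1}\le\bigl(\sup_F(1-|c|^p)\bigr)^{b_j-1}$ requires $b_j\ge 1$, which of course holds for $N$ large since $pb_j=N+o(N)$.
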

The proof of this proposition is built on the following useful lemma.

\begin{lemma}
\label{PGDbeta}
Let $a > 0 , c > 0, \theta > 0$ and $c(\theta) > 0$ such that $c(\theta)/\theta \rightarrow c$ as $\theta \rightarrow \infty$. Let
$$Y_\theta \stackrel{d}{=}\ \mathrm{Beta} \left(a, c(\theta)\right)$$
Then, when $\theta \rightarrow \infty$, the family of distributions of $(Y_\theta)$ satisfies the LDP on $(0, 1)$ at scale $\theta$ with good rate function:
\begin{equation*}
\label{ldpbeta}
    J(x)  = - c \log (1 - x)\,.
\end{equation*}
\end{lemma}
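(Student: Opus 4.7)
The plan is to work directly from the explicit Beta density on $(0,1)$,
$$f_\theta(x) = \frac{\Gamma(a + c(\theta))}{\Gamma(a)\Gamma(c(\theta))}\, x^{a-1}(1-x)^{c(\theta)-1},$$
and to perform a Laplace-type analysis. Stirling's formula gives $\Gamma(a+c(\theta))/\Gamma(c(\theta)) \sim c(\theta)^{a}$, so the logarithm of the normalizing constant is $O(\log\theta) = o(\theta)$. Combined with the trivial bound $(a-1)\theta^{-1}\log x \to 0$ on compact subsets of $(0,1)$, this yields the locally uniform convergence
$$\frac{1}{\theta}\log f_\theta(x) \xrightarrow[\theta\to\infty]{} c\log(1-x) = -J(x), \qquad x \in (0,1).$$

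Next I would turn this pointwise Laplace-type asymptotic into the LDP by exploiting the monotonicity of $J$: on $[0,1)$ the function $J$ is continuous and strictly increasing, with $J(0)=0$ and $J(x)\uparrow +\infty$ as $x\uparrow 1$, so for any $A\subset (0,1)$ one has $\inf_A J = J(\inf A)$. For the upper bound on a closed set $F\subset (0,1)$ with $\alpha := \inf F$, use the monotonicity of $t \mapsto t^{c(\theta)-1}$ on $(0,1)$ (valid once $c(\theta)\geq 1$) to bound $(1-x)^{c(\theta)-1} \leq (1-\alpha)^{c(\theta)-1}$ on $F$, then integrate $x^{a-1}$ against Lebesgue measure to get
$$\mathbb{P}(Y_\theta \in F) \leq \frac{(1-\alpha)^{c(\theta)-1}}{a\,B(a,c(\theta))};$$
taking $\theta^{-1}\log$ and using Stirling gives $\limsup \leq -J(\alpha) = -\inf_F J$. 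For the lower bound on an open set $G$, choose $x_0\in G$ with $J(x_0)$ close to $\inf_G J$, enclose it in an interval $(x_0-\delta,x_0+\delta)\subset G$, lower-bound the density by its minimum value on this interval (which for the factor $(1-x)^{c(\theta)-1}$ is attained at $x_0+\delta$), and send $\delta \to 0$.

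The only real technical care needed is uniformity of the Stirling estimate and the treatment of the boundaries of $(0,1)$. Near $x=1$ the blow-up $J(x)\to\infty$ makes the exponential mass negligible, and near $x=0$ the factor $x^{a-1}$ is integrable for $a>0$, so no issue arises. Goodness of $J$ is immediate since its sublevel sets $\{J\leq L\} = [0,\, 1-e^{-L/c}]$ are compact subsets of $[0,1)$. No serious obstacle appears: the argument is a clean instance of Laplace's method applied to the explicit density, with the monotonicity of $J$ doing all the combinatorial work in passing from intervals to arbitrary Borel sets.
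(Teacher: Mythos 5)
Your proof is correct, but it takes a genuinely different route from the paper's. The paper starts from the representation $Y_\theta \stackrel{d}{=} \gamma(a)/(\gamma(a) + \gamma'(c(\theta)))$ with $\gamma(a)$ and $\gamma'(c(\theta))$ independent, establishes separate LDPs for $\theta^{-1}\gamma(a)$ (rate $x$) and $\theta^{-1}\gamma'(c(\theta))$ (rate $cx - 1 - \log(cx)$), tensorizes by independence, and then applies the contraction principle under the continuous map $(x,y) \mapsto x/(x+y)$; the infimum of the sum of rates over the fibre of this map is then computed to equal $-c\log(1-x)$. You instead attack the explicit Beta density directly: Stirling to kill the normalizing constant at exponential scale, pointwise Laplace asymptotics $\theta^{-1}\log f_\theta \to -J$, and then the one-dimensional shortcut of exploiting the strict monotonicity of $J$ to pass from interval estimates to arbitrary Borel sets (upper bound on a closed $F$ via $(1-x)^{c(\theta)-1}\le (1-\inf F)^{c(\theta)-1}$; lower bound on an open $G$ by inserting a small interval and sending its width to zero). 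Both arguments are sound and of comparable length. The paper's route is more structural: it isolates the exponential/Gamma mechanism behind all Beta LDPs (which the authors explicitly wanted to highlight) and plugs cleanly into standard LDP machinery, at the cost of invoking the contraction principle. Your route is more elementary and self-contained, relying only on the density, Stirling, and the monotone-rate trick; it would generalize less easily to situations where an explicit density is not available or the rate function is not monotone, but here it is perfectly adequate. The one cosmetic point worth making explicit in a write-up is that goodness is most naturally phrased on the compact state space $[0,1]$ (setting $J(1)=+\infty$), where the sublevel sets $[0,1-e^{-L/c}]$ are compact; you essentially say this.
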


This result appears with variants in the literature (Lemma~3.1 in \cite{DawsonFeng2}, Lemma~2.1 in \cite{fengseul}). It is proved therein with a direct computation. We give here another proof that enlightens the role played by the exponential distribution in all LDP about Beta distributions. A slight variant of this proof was used once in Lemma~4.3 in \cite{FabLoz}.
\medskip

\begin{proof}[Proof of Lemma \ref{PGDbeta}] We start with the representation (\ref{10}):
$$Y_\theta \stackrel{d}{=}\ \frac{\gamma(a)}{\gamma(a) + \gamma'(c(\theta))}\,,$$
where $\gamma(a)$ and $\gamma'(c(\theta))$ are independent. First, observe that the family of distributions of $\theta^{-1} \gamma(a)$ satisfies the LDP on $(0, \infty)$ with rate function $$I_0(x) = x\,.$$ Then, observe that the family of distributions of $\theta^{-1} \gamma\left(c(\theta)\right)$ satisfies the LDP on $(0, \infty)$ with rate function $$J_0 (x) = cx - 1 - \log (cx)\,.$$
By independence, the pair $(\theta^{-1} \gamma(a), \theta^{-1} \gamma'\left(b(\theta)\right))$ satisfies the LDP with rate function $(x_1, x_2) \mapsto I_0(x_1) + J_0 (x_2)$. Since the mapping
\begin{eqnarray*}
(0, \infty) \times (0, \infty) &\rightarrow& (0,1)\\
(x,y) &\mapsto& \frac{x}{x+y}
\end{eqnarray*}
is continuous, the contraction principle gives an LDP for $(Y_\theta)$ with rate function
\begin{eqnarray*}J(x) = \inf\left\{I_0(x_1) + J_0 (x_2) ; \frac{x_1}{x_1 + x_2} = x \right\}
= - c \log (1 - x)\,,
\end{eqnarray*}
which ends the proof.
\end{proof}
\medskip

\begin{proof}[Proof of Theorem~\ref{LDPcanon}]
Since the coordinates of $C^{(k)}$ are independent the LDP will follow
from the LDP of each $C_j$ $(j=1,2,...,k)$.
From Proposition~\ref{proppropren} observe that
$$C_j \stackrel{d}{=}\ \epsilon_j Z_j^{1/p} \ , \ \ Z_j \stackrel{d}{=}\ \mathrm{Beta}(a_j , b_j)\,.$$
For $j$ fixed, we may apply Lemma \ref{PGDbeta} with
\begin{equation*}a= a_j \ , \theta=N , \ b(\theta) = b_j  , c = \lim \frac{b_j}{N} = \frac{1}{p}, \,.\end{equation*}
We get an LDP for $Z_j$ on $(0,1)$ with rate function $x \mapsto -p^{-1} \log (1-x)$. One easily deduces from this that
$C_j$ satisfies the LDP on $(-1, +1)$ with rate function $x \mapsto -p^{-1} \log (1-|x|^p)$. By independence, the vector $C^{(k)}$ satisfies the LDP on $(0,1)^k$ with rate function
\begin{equation*}
(c_1, \cdots , c_k) \mapsto \sum_{j=1}^k -p^{-1} \log (1-|c_k|^p)\,.
\end{equation*}
\end{proof}
\medskip

\begin{proof}[Proof of Theorem~\ref{tgv}]
LDP for $X^{(k)}$ follows directly from the  contraction
principle. The good rate function is given by the
relationship
\begin{equation*}
   I(x) = J(\mathcal{C}_k(x)),\; x\in \mathbb{B}_{k,p}.
\end{equation*}
Furthermore, from the definitions of the canonical coordinates (see
\eqref{unun}) it follows that
\begin{equation*}
    \prod_{i=1}^{k}(1-|c_i|^p) = 1-\|x\|_p^p\,,
\end{equation*}
which allows to conclude the proof.
\end{proof}

\paragraph{The $\ell^2$-ball and a functional LDP}
The $\ell^2$-ball case is quite peculiar, due to its connection with
 the functional unit ball
 \begin{equation*}
    \mathcal{B}_2:=\{f\in L^2([0,1]):\|f\|_2 < 1\}\,,
\end{equation*}
of the Hilbert space $L^2([0,1])$. This allows to extend somehow the LDP of Theorem \ref{tgv}.
In this section, we will give a functional LDP companion result of  Theorem \ref{tgv}.
Let $(e_n)_{n\geq1}$ be any orthonormal basis of $L^2([0,1])$. So that, we may rewrite
\begin{equation*}
   \mathbb{B}_{N,2} := \bigg\{(x_1, x_2,...,x_N)\in\mathbb{R}^N:
             x_i = \langle f,e_i\rangle,\ i=1,2,...,N,\ f\in \mathcal{B}_2\bigg\}.
\end{equation*}
Let the random sequence $(F_N)$ of $\mathcal{B}_2$ be defined by
\begin{equation*}
     F_N = \sum_{i=1}^{N} X_ie_i,
\end{equation*}
where the sequence $(X_i)$ satisfies the assumption of Theorem \ref{tpoing}.
The following Theorem follows directly from a projective limiting  procedure (see \cite{DZ} Section 4.6).
\begin{theorem}
The sequence $(F_N)_N$ satisfies  a LDP in $\mathcal{B}_2$ with good rate function
\begin{equation*}
    I_\mathcal{B}(f) = - \frac{1}{2} \ln(1-\|f\|_2)%\;(f\in\mathcal{B}_2)
    \,.
\end{equation*}
\end{theorem}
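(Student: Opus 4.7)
The plan is to deduce the functional LDP from Theorem \ref{tgv} by a projective limit (Dawson--Gärtner) argument, using that $\mathcal{B}_2$ is naturally the projective limit of the finite dimensional balls $\mathbb{B}_{k,2}$ along the coordinate maps $\pi^{(k)}:f\mapsto (\langle f,e_1\rangle,\ldots,\langle f,e_k\rangle)$. First I would equip $\mathcal{B}_2$ with the topology generated by the family $\{\pi^{(k)}\}_{k\ge 1}$ (equivalently, the weak topology inherited from $L^2([0,1])$ restricted to $\mathcal{B}_2$). For this topology, $(\mathcal{B}_2,\pi^{(k)})$ is a projective system with values in the finite-dimensional balls $\mathbb{B}_{k,2}$, and the projection of $F_N$ onto the $k$ first coordinates is exactly the vector $X^{(k)}$ of Theorem \ref{tgv} (for $N\ge k$).

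Next I would invoke Theorem \ref{tgv} to assert that for each fixed $k$, $\pi^{(k)}(F_N)=X^{(k)}$ satisfies an LDP in $\mathbb{B}_{k,2}$ with good rate
\[
I_k(x_1,\ldots,x_k)=-\tfrac{1}{2}\ln\!\Big(1-\sum_{i=1}^k x_i^2\Big).
\]
The compatibility of these rate functions along the natural projections $\mathbb{B}_{k+1,2}\to\mathbb{B}_{k,2}$ (the contraction of $I_{k+1}$ gives $I_k$) is immediate: minimising $-\tfrac12\ln(1-\sum_{i=1}^{k+1}x_i^2)$ over the free coordinate $x_{k+1}$, constrained to lie in the open interval determined by the others, is attained at $x_{k+1}=0$ and yields $I_k$. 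Dawson--Gärtner (Theorem 4.6.1 of \cite{DZ}) then furnishes a LDP for $(F_N)$ on the projective limit with good rate function
\[
I_\mathcal{B}(f)=\sup_{k\ge 1} I_k\bigl(\pi^{(k)}(f)\bigr)=\sup_{k\ge 1}\Bigl(-\tfrac12\ln\bigl(1-\textstyle\sum_{i=1}^k \langle f,e_i\rangle^2\bigr)\Bigr).
\]

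Finally I would identify this supremum. Since $-\tfrac12\ln(1-t)$ is increasing in $t\in[0,1)$ and Parseval gives $\sum_{i=1}^k\langle f,e_i\rangle^2\nearrow \|f\|_2^2$, the supremum equals $-\tfrac12\ln(1-\|f\|_2^2)$ whenever $f\in\mathcal{B}_2$ (the case $\|f\|_2=1$ forcing the rate to be $+\infty$). Goodness of $I_\mathcal{B}$ follows because the level sets $\{\|f\|_2^2\le r\}$ for $r<1$ are closed balls, hence weakly compact in $L^2$. The step that requires care is the choice of topology: I must ensure that $\mathcal{B}_2$ with the projective (weak) topology is Hausdorff and that $F_N$ is measurable into it, which is standard since $(e_i)$ is an orthonormal basis, and the rate function must be shown lower semi-continuous for this topology, which is true since each $f\mapsto \sum_{i=1}^k\langle f,e_i\rangle^2$ is continuous and the supremum of continuous functions is lsc.
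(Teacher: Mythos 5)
Your proof follows exactly the route the paper has in mind: the authors simply remark that the result ``follows directly from a projective limiting procedure (see \cite{DZ} Section 4.6),'' which is the Dawson--G\"artner theorem you invoke; you have correctly supplied the details (the projective system, the identification of $\pi^{(k)}(F_N)$ with $X^{(k)}$ from Theorem~\ref{tgv}, the compatibility of the finite-dimensional rates, the supremum computation via Parseval, and the lower semicontinuity and compactness of level sets in the weak topology). One thing worth flagging: your computation correctly yields $I_\mathcal{B}(f) = -\tfrac12\ln\bigl(1-\|f\|_2^2\bigr)$, which is what contraction of the finite-dimensional rate $-\tfrac12\ln(1-\|x\|_2^2)$ from Theorem~\ref{tgv} must give; the paper's statement writes $-\tfrac12\ln(1-\|f\|_2)$ without the square, which is evidently a typographical slip, and your version is the right one.
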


\subsection{Donsker limit theorems}

Come back to Proposition~\ref{poincare.p.version}, in a first extension we can take $k= k(N)$ and show that the distance in variation between the law of $\pi^{(k)}(N^{\frac{1}{p}} \eta^{(N)})$ and ${\bf G}_p^{\otimes k}$ tends to $0$ as soon as $k(N) = o(N)$ \cite[Theorem 3]{Naor1}. The Euclidean case ($p=2$) is treated in  \cite{Diac} (see \cite{Jiang3} for related results).
Besides, since we may write for every $N \geq 1$
$$p^{-1/p} N^{\frac{1}{p}-\frac{1}{2}}\left(\sum_{k=1}^{\lfloor Ns\rfloor} \eta_k , s \in [0,1]\right) \ \stackrel{d}{=}\
\left(\frac {p^{-1}N}{\sum_{k=1}^N |g_k|^p}\right)^{1/p} \cdot \left(N^{-\frac{1}{2}} \sum_{k=1}^{\lfloor Ns\rfloor} g_k , s \in [0,1]\right)\,,$$
 we can deduce the convergence to the standard Brownian motion $\{W_s , s \in [0,1]\}$. It is the classical Donsker's theorem for self-normalized processes. It holds actually under very weak assumptions (see \cite{Donsker}).
In the same vein, owing to the results of Shao (\cite{Shao2}) the family $\frac{N^{\frac{1}{p} - 1}}{p}\left(\sum_{k=1}^N \eta_k \right)$  satisfy the LDP with good rate function
\begin{equation*}
    I(x)= \inf_{c \geq 0}\sup_{t \geq 0}\left\{t |x|(p-1) c^{\frac{p}{p-1}} - \log \int \frac{\exp\left(t\left(cy - (1+|x|) |y|^p \right)\right)}{2\Gamma\left(1 +\frac{1}{p}\right)}\ dy \right\}
\end{equation*}

One may think that the LDP holds also for the sequence of processes
$\left(\frac{N^{\frac{1}{p} - 1}}{p}\sum_{k=1}^{\lfloor Ns\rfloor} \xi_k , s \in [0,1]\right)$
with the rate function
\begin{equation*}
   \widetilde I (\varphi) = \int_0^1 I(\dot\varphi(s)) ds\,.
\end{equation*}

\subsection{The $\ell^1$ ball and the GEM$(\alpha, \theta)$ distribution}
In a nice paper, Dawson and Feng (\cite{DawsonFeng2}, Theorem 4.3)  proved that the LDP holds in
$\mathbb{B}_{N,1}$ when the underlying canonical variables are Beta $(1, \theta)$ and let $\theta \rightarrow \infty$. It is exactly a particular case of our Theorem \ref{tgv} with $p=1$, $\beta_j =0$ and $\alpha_j = j/p$ and $\theta = N/p$.
In their Theorem 4.4, they extend the LDP to $\mathbb{B}_{\infty,1}$ with rate function defined on $\mathcal{S}_\infty^<$ by
\begin{equation*}
    I_1(x) =  - \log (1 - \Vert x\Vert_1).
\end{equation*}
It is the so called GEM$(\theta)$ model. It is exactly a particular case of our Theorem \ref{tgv} with $p=1$, $\beta_j =0$ and $\alpha_j = j -1$ and $\theta = N$.

In another paper Feng \cite{fengseul} proved the LDP when the $k$-th canonical variable is Beta$(1-\alpha, \theta + k\alpha)$ distributed. They obtained the same rate function. It is the so called GEM$(\alpha, \theta)$ model.  It is exactly a particular case of our Theorem \ref{tgv} with $p=1$, $\beta_j =-\alpha$ and $\alpha_j = (\alpha +1)j -1$ and $\theta = N$.

\section{Moment spaces revisited}
\label{sec.mom.ball}

\subsection{Moments: the complex case}
\label{sec.momens.complex}
All this subsection comes from the book of Simon \cite{Simon1} Section 1 or \cite{Simon3} Sections 2 and 3. We recall here the connection between moments of a probability measure on the torus $\mathbb{T}$ and canonical moments built through orthogonal polynomials. To begin with, let $\mu$ be an arbitrary nontrivial (that is not supported by a finite number of points) probability on $\mathbb{T}$. The functions $1, z, z^2, \cdots$ are linearly independent in $L^2(\mathbb{T}, d\mu)$. 
Following the Gram-Schmidt procedure we define the monic
orthogonal polynomials $(\Phi_n)$. More precisely, $\Phi_0 (z) \equiv 1$ and for $n\geq 1$, $\Phi_n (z)$
is the projection of $z^n$ onto $\{1, \cdots,z^{n-1}\}^\perp$.
If $\mu$ is supported on the finite set $\{z_1, \cdots , z_N\}$, we still define $\Phi_k$ until $k= N-1$. We define $\Phi_N$ as the unique monic polynomial of degree $N$ such that $\Vert \Phi_N\Vert =0$ i.e.
$$\Phi_N (z) = \prod_{j=1}^N (z-z_j)\ \,.$$
Some useful polynomials associated to the sequence $(\Phi_n)$ are
the reversed (or reciprocal) polynomials. They are defined by $\Phi_0^\star (z)\equiv 1$ and
\begin{equation*}
\label{reversed}
    \Phi_n^\star (z) = z^n \overline{\Phi_n(1/\bar z)}.
\end{equation*}
Notice that $\Phi_n^\star$ is the unique polynomial of degree at most $n$,
orthogonal to $z, z^2, \cdots, z^n$ and such that $\Phi_n^\star (0)=1$. We now define
a quantity which appears to be central in our paper.
\begin{definition}
For $j\in\mathbb{N}$, we define the canonical moment $c_j:=-\overline{\Phi_{j}(0)}$.
\label{defican}
\end{definition}
In the sequel, when it will be necessary to make precise that the canonical moment depend on the underlying measure $\mu$, we will sometimes also write $c_j(\mu)$.
The coefficients  $c_j$, $j \geq 0$ are called  Verblunsky coefficients  by Simon. They are also named
after Schur, Szeg\H{o}, or Geronimus \cite{ibra}. They are sometimes called reflection coefficients \cite{burg}. One of
their properties is recalled below without proof for further use
\begin{proposition}
\label{propcan}
\begin{equation*}
     \Vert\Phi_{n+1}\Vert^2 = \left(1 -|c_{n+1}|^2\right)\Vert\Phi_{n}\Vert^2 = \prod_{j=1}^{n+1}\left(1 -|c_j|^2\right).
     \label{moreover}
\end{equation*}
\end{proposition}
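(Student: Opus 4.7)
The plan is to derive the norm-reduction formula from the Szegő (Schur-type) recursion relating $\Phi_{n+1}$, $z\Phi_n$, and $\Phi_n^\star$. Once the recursion is in hand, a single application of orthogonality gives all needed inner products, and the factorization drops out by expanding $\|\Phi_{n+1}\|^2$. The second equality in the proposition then follows by straightforward induction from $\|\Phi_0\|^2=1$.

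First I would establish the recursion $\Phi_{n+1}(z) = z\Phi_n(z) - \overline{c_{n+1}}\,\Phi_n^\star(z)$. For this, note that $z\Phi_n - \Phi_{n+1}$ has degree at most $n$, and, since $\langle z\Phi_n, z^k\rangle = \langle \Phi_n, z^{k-1}\rangle = 0$ for $1\le k\le n$, it is orthogonal to $z, z^2,\ldots,z^n$. By the characterization of $\Phi_n^\star$ recalled in the excerpt, this forces $z\Phi_n - \Phi_{n+1} = \lambda\,\Phi_n^\star$ for some $\lambda\in\mathbb{C}$. Evaluating at $z=0$ and using $\Phi_n^\star(0)=1$ together with Definition~\ref{defican} yields $\lambda = -\Phi_{n+1}(0) = \overline{c_{n+1}}$.

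Next I would compute $\langle z\Phi_n, \Phi_n^\star\rangle$. Since $\Phi_n^\star$ has degree at most $n$ and $\Phi_{n+1}\perp \{1,z,\ldots,z^n\}$, one has $\langle \Phi_{n+1},\Phi_n^\star\rangle=0$; applying the recursion gives
\begin{equation*}
\langle z\Phi_n,\Phi_n^\star\rangle = \overline{c_{n+1}}\,\|\Phi_n^\star\|^2.
\end{equation*}
Moreover, on $\mathbb{T}$ we have $|\Phi_n^\star(z)|=|z|^n|\Phi_n(1/\bar z)|=|\Phi_n(z)|$, so $\|\Phi_n^\star\|=\|\Phi_n\|$, and similarly $\|z\Phi_n\|=\|\Phi_n\|$.

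Expanding $\|\Phi_{n+1}\|^2 = \langle z\Phi_n - \overline{c_{n+1}}\Phi_n^\star,\,z\Phi_n - \overline{c_{n+1}}\Phi_n^\star\rangle$ and substituting the values above, the four terms collapse to
\begin{equation*}
\|\Phi_{n+1}\|^2 = \|\Phi_n\|^2 - 2|c_{n+1}|^2\|\Phi_n\|^2 + |c_{n+1}|^2\|\Phi_n\|^2 = \bigl(1-|c_{n+1}|^2\bigr)\|\Phi_n\|^2,
\end{equation*}
which is the first equality. Iterating from $\|\Phi_0\|^2 = \mu(\mathbb{T})=1$ gives the product formula. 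The only delicate point — the ``main obstacle'' in spirit — is pinning down the Szegő recursion cleanly, since everything after that is a mechanical two-line expansion; once one recognizes that $z\Phi_n-\Phi_{n+1}$ is characterized by the same orthogonality and degree conditions as $\Phi_n^\star$, the rest is automatic.
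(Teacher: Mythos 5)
The paper states this proposition explicitly ``without proof'' and cites Simon's book, so there is no paper argument to compare against. Your proof is correct and is precisely the standard Szeg\H{o}-recursion argument one finds in Simon: you correctly identify $z\Phi_n-\Phi_{n+1}$ as a degree-$\le n$ polynomial orthogonal to $z,\ldots,z^n$ (using that multiplication by $z$ is unitary on $L^2(\mathbb{T},d\mu)$), invoke the one-dimensionality of that orthogonal complement to get $z\Phi_n-\Phi_{n+1}=\lambda\Phi_n^\star$, pin down $\lambda=\overline{c_{n+1}}$ by evaluating at $0$, read off the cross inner product from $\langle\Phi_{n+1},\Phi_n^\star\rangle=0$, and use $\|\Phi_n^\star\|=\|z\Phi_n\|=\|\Phi_n\|$ to collapse the expansion. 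The induction base $\|\Phi_0\|^2=\mu(\mathbb{T})=1$ then gives the product formula. No gaps.
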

Consequently, if $\mu$ is nontrivial, $c_j\in \mathbb{D}$ for every $j> 0$.
Further, if the support of $\mu$ consists in $N$ points, then $c_j\in \mathbb{D}$ for $1\leq j\leq N-1$ and $c_{N} \in \partial\mathbb{D}$.
\label{211}
A theorem due to Verblunsky asserts that the  correspondence between $\mu$ and the sequence of its canonical moment is a bijection.
The Verblunsky's formula (\cite{Simon1} Theorem 1.5.5) claims that for each $N$, there is a polynomial
$V^{(N)}(c_1, \cdots, c_{N-1}, \bar{c}_{0}, \cdots \bar{c}_{N})$ with integer coefficients so that the
moments $\{t_n\}_n$ of $\mu$ satisfy
\begin{equation}
t_{N}:=\int z^N\mu(dz) = c_N \prod_{j=1}^{N-1}\left(1 -|c_j|^2\right)
       +  V^{(N)}(c_1, \cdots, c_{N-1}, \bar{c}_{0}, \cdots \bar{c}_{N-1})\,.
       \label{canon}
\end{equation}
Conversely, $c_N$ is a rational function of $t_1, \bar t_1, \cdots , t_{N-1}, \bar t_{N-1}, t_{N}$.
Moreover, as remarked by Simon (\cite{Simon1} Section 3.1), formula (\ref{canon}) tells us that canonical moments measure
relative positions of $t_n$ among all values consistent with $c_0, c_1, \cdots, c_{n-1}$.
To be more precise, for $n \geq 1$, set
\begin{equation*}
     M_N^{\mathbb{T}} = \left\{\left(\int z^j \mu(dz)\right)_{1\leq j\leq N} : \mu \in \mathcal{M}_1(\mathbb{T})\right\}\, ,
\end{equation*}
where $\mathcal{M}_1(\mathbb{T})$ denotes the set of all probability measures on $\mathbb{T}$.
Then, given   $\left(t_1, \cdots , t_N\right)\in M_N^{\mathbb{T}}$, the range of the $(N+1)$th moment
\begin{equation*}
   t_{N+1} = \int z^{N+1} d\eta(z)
\end{equation*}
as $\eta$ varies over all probability measures having $(t_1, \cdots , t_N)$ as $N$ first moments,
is a disk centered at $s_{N+1} = V^{(N)}(t_1, \cdots, t_{N}, \bar t_1, \cdots \bar t_{N})$ with radius
\begin{equation*}
   r_{N+1} = \prod_{j=1}^{N}\left(1 -|c_j|^2\right)
\end{equation*}
(by Verblunsky theorem, these quantities only depend on the prescribed $N$ first moments).
If $r_{N+1}\neq 0$, the relative position is
\begin{equation*}
   \frac{t_{N+1} - s_{N+1}}{r_{N+1}}\in \mathbb{D}.
\end{equation*}
A very nice result is that the above quantities are the canonical moments $c_{N+1}$ of $\mu$ (see \cite{DS}). So that, as pointed out in \cite {DS}, canonical moments may be built both geometrically or algebraically.

\subsection{Moment space and generalized Dirichlet distribution}
\label{ssec:mom_spc_GD}

In this section we discuss the connection between randomized balls and randomized moment spaces.
%that endowing $M_{N}^{\mathbb{T}}$  (resp.  $M_{N}^{[0,1]}$) with the uniform probability 
%is equivalent to endow $\mathbb{B}_{N,2}^{\mathbb{C}}(\simeq\mathbb{B}_{2N,2}$) (resp. $\mathbb{B}_{N,2}$) with a $p$-generalized Dirichlet distribution. 
Indeed, the asymptotic results in \cite{Chang}, \cite{FabLoz} and \cite{LozEJP} are in the same spirit as those obtained here. In an early version of the present paper, we wrote a result exhibiting a natural way to push forward the uniform measure on complex moment spaces towards the uniform one on the complex Euclidean balls. Unfortunately, this nice result is not true! It was based on a wrong Jacobian computation for complex moment space performed in Lemma 7.3 in \cite{LozEJP} where a factor $2$ is missed in the exponentiation. 

\paragraph{Complex moments and Euclidean balls}
\label{ballsand}
The aim of this subsection is to underline a natural connection between the moment space $M_N^{\mathbb{T}}$ and the Euclidean ball. This will done using canonical moments. To begin with, let us go back to the sequence of orthogonal polynomials for the probability measure $\mu$. Let $N$ be an integer such that the support of $\mu$ has cardinality at least $N+1$. Here, we will normalize the orthogonal system in a different way than in Section~\ref{sec.momens.complex} by setting:
\begin{equation*}
   \label{orthonc}
   \varphi_0 = 1 , \quad \varphi_n = \frac{\Phi_n}{\Vert\Phi_n\Vert}, \quad n = 1,2,...,N.
\end{equation*}
We also define the associated reversed polynomials:
\begin{equation*}
   \varphi_n^\star = \frac{\Phi_n^\star}{\Vert\Phi_n\Vert}\,.
\end{equation*}
Since $\varphi_{N}^\star$ is a polynomial of degree $N$  and since
$\Vert \varphi_{N}^\star \Vert^2 = \Vert \varphi_{N} \Vert^2 = 1$, we have
\begin{equation}
   \label{=1}
   \sum_{k=0}^{N} |\langle \varphi_N^\star , \varphi_{k} \rangle|^2 = 1\,.
\end{equation}
Set
\begin{equation}
   \label{defpi}
   \pi_k := \langle \varphi_{N}^\star , \varphi_{k} \rangle,\quad k = 0,1,...,N
\end{equation}
the formula (1.5.59) in \cite{Simon1} yields to the following lemma.
\begin{lemma} \label{lSimon63}
\begin{equation*}
\label{Simonp63}
      \pi_k  = - \bar c_{k-1} \prod_{r = k+1}^{n} \sqrt{1 - |c_{r}|^2},\quad k = 1,2,...,N,
\end{equation*}
where by convention $c_0 = -1$.
\end{lemma}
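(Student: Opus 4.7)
The plan is to expand $\varphi_N^\star$ in the orthonormal basis $\{\varphi_0,\ldots,\varphi_N\}$ of polynomials of degree at most $N$ and read off the coefficients; since $\pi_k=\langle\varphi_N^\star,\varphi_k\rangle$, these coefficients are precisely the $\pi_k$. The mechanism to unfold $\varphi_N^\star$ will be the Szeg\H{o} recursion, iterated from $N$ down to $0$.

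First I would recall the Szeg\H{o} recursions for the monic family, namely
$$\Phi_{n+1}(z)=z\Phi_n(z)-\bar c_{n+1}\Phi_n^\star(z),\qquad \Phi_{n+1}^\star(z)=\Phi_n^\star(z)-c_{n+1}z\Phi_n(z),$$
the first obtained by matching the constant term (using $\Phi_n^\star(0)=1$ together with the definition $c_{n+1}=-\overline{\Phi_{n+1}(0)}$) and the second by applying the reversal $P\mapsto P^\star$ to the first. Dividing by the normalisations and invoking Proposition~\ref{propcan} to get $\|\Phi_{n+1}\|=\sqrt{1-|c_{n+1}|^2}\,\|\Phi_n\|$ yields the orthonormal counterparts
$$\sqrt{1-|c_n|^2}\,\varphi_n=z\varphi_{n-1}-\bar c_n\varphi_{n-1}^\star,\qquad\sqrt{1-|c_n|^2}\,\varphi_n^\star=\varphi_{n-1}^\star-c_n z\varphi_{n-1}.$$

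The key algebraic step is to eliminate the unwanted term $z\varphi_{n-1}$ between these two identities so as to obtain a ``down-step'' expressing $\varphi_n^\star$ purely in terms of $\varphi_{n-1}^\star$ and $\varphi_n$; a short linear combination of the two recursions produces
$$\varphi_n^\star=\sqrt{1-|c_n|^2}\,\varphi_{n-1}^\star-c_n\,\varphi_n.$$
This identity is the main technical input: it strips one degree from $\varphi_n^\star$ without reintroducing any polynomial of degree greater than $n-1$ besides the single orthonormal atom $\varphi_n$, so it can be iterated cleanly.

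Finally, I would apply this down-step from $n=N$ down to $n=1$, terminating at $\varphi_0^\star=\varphi_0=1$. At each stage the running coefficient of $\varphi_k$ inherits one extra factor $\sqrt{1-|c_r|^2}$ coming from the outer iterations $r=k+1,\ldots,N$, producing an explicit expansion of the form
$$\varphi_N^\star=\Bigl(\prod_{r=1}^N\sqrt{1-|c_r|^2}\Bigr)\varphi_0-\sum_{k=1}^N c_k\Bigl(\prod_{r=k+1}^N\sqrt{1-|c_r|^2}\Bigr)\varphi_k.$$
Orthonormality then identifies $\pi_k$ with the $\varphi_k$-coefficient, and the convention $c_0=-1$ is precisely what is needed to absorb the boundary (``$\varphi_0$'') term into the same unified expression as the others, giving the claimed formula. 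The only real bookkeeping hurdle in the argument is tracking the accumulated radicals $\sqrt{1-|c_r|^2}$ correctly through the descending induction; all the other steps are direct algebraic manipulations of the Szeg\H{o} recursion.
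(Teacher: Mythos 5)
Your proof is correct but takes a genuinely different route from the paper. The paper's argument is a short direct computation: since $\Phi_N^\star$ is orthogonal to $z,\ldots,z^N$ and the monic $\Phi_N$ is orthogonal to all lower-degree polynomials, one gets $\langle\Phi_N^\star,P\rangle = P(0)\int\bar z^N\Phi_N\,d\mu = P(0)\|\Phi_N\|^2$ for every polynomial $P$ of degree at most $N$, and then specializing to $P=\varphi_k$ and using $\varphi_k(0)=\Phi_k(0)/\|\Phi_k\| = -\bar c_k/\|\Phi_k\|$ together with Proposition~\ref{propcan} finishes the computation in two lines. You instead derive the descent identity $\varphi_n^\star = \sqrt{1-|c_n|^2}\,\varphi_{n-1}^\star - c_n\varphi_n$ by eliminating $z\varphi_{n-1}$ between the two Szeg\H{o} recursions, and iterate it to get the full orthonormal expansion of $\varphi_N^\star$; this is heavier machinery but produces more information (the whole coefficient vector at once, with the $\pi_0$ case built in automatically). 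Both routes land on $\pi_k = -\bar c_k\prod_{r=k+1}^{N}\sqrt{1-|c_r|^2}$, which is in fact what the paper's own proof establishes; the lemma as printed has an index typo ($\bar c_{k-1}$ should be $\bar c_k$, with the $c_0=-1$ convention covering $k=0$). One small slip in your write-up: with the sesquilinear convention $\langle f,g\rangle = \int\bar f\,g\,d\mu$ used in the paper, $\pi_k=\langle\varphi_N^\star,\varphi_k\rangle$ is the \emph{conjugate} of the $\varphi_k$-coefficient in the expansion, not the coefficient itself; since the radicals are real this only changes $-c_k$ into $-\bar c_k$, which is the form you need anyway, but the conjugation should be noted.
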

For the sake of completeness let us give a short proof.

\begin{proof}
Let $P$ be a polynomial of degree at most $N$.
Recall that $\Phi_N^\star$ is orthogonal to $z,\ldots, z^{N}$ so that  $\langle \Phi_n^\star , [P(z) - P(0){\bf 1}] \rangle = 0$. Therefore
\begin{equation*}
       \langle \Phi_N^\star , P\rangle = P(0) \int \overline{\Phi_N^\star (z)} d\mu(z) = P(0) \int \bar z^N \Phi_N(z) d\mu(z) = P(0) \Vert \Phi_N\Vert^2
\end{equation*}
hence, taking $P = \varphi_k$  (for $k\leq N)$
\begin{equation*}
    \pi_k =\langle  \varphi_N^\star , \varphi_k \rangle = \left\langle  \frac{\Phi_N^\star}{\Vert\Phi_N\Vert} , \varphi_k
           \right\rangle
          = \varphi_k(0)\Vert\Phi_n\Vert
          = \Phi_k (0) \frac{\Vert \Phi_n\Vert}{\Vert\Phi_k\Vert}.
\end{equation*}

The previous equality and Proposition \ref{propcan} give
\begin{equation*}
 \pi_0=\Vert\Phi_N\Vert=\prod_{r = 1}^{N} \sqrt{1 - |c_{r}|^2}.
\end{equation*}
Now for $1 \leq k\leq N$, by the same arguments we obtain
\begin{equation*}
   \pi_k = \Phi_k (0) \frac{\Vert \Phi_N \Vert}{\Vert\Phi_k\Vert}
         = -\bar c_k\frac{\Vert \Phi_N\Vert}{\Vert\Phi_k\Vert}
         = -\bar c_k\frac{\prod_{r = 1}^{N} \sqrt{1 - |c_{r}|^2}}
           {\prod_{r = 1}^{k} \sqrt{1 - |c_{r}|^2}}=-\bar c_k \prod_{r = k+1}^{N} \sqrt{1 - |c_{r}|^2}.
\end{equation*}
\end{proof}

Now, from (\ref{defpi}) and (\ref{=1}) we see that the point
\begin{equation}
   \label{reverse}
   z = (z_1, \cdots, z_N) := (\pi_{N} , \cdots , \pi_1)
\end{equation}
lies in the complex ball $\mathbb{B}_{N,2}^{\mathbb{C}}$ (see \eqref{defboulecomplex} below).
Furthermore,  setting:
\begin{equation*}\label{kappazeta}
    \kappa_r = - \bar c_{N+1-r},\quad r=1,2,\ldots,N
\end{equation*}
we get from (\ref{reverse})  and Lemma \ref{lSimon63}
\begin{eqnarray*}
   \label{newkx}
   z_1 &=& \kappa_1\\
   z_j &=& \kappa_j \prod_{s=1}^{j-1} {\sqrt{1 - |\kappa_s|^2}},\quad j = 1,2,...,N.
\end{eqnarray*}
Roughly speaking, the previous relation is the analogue for the ball of (\ref{canon}). Notice that
the relationships $(t_k)\leftrightarrow(c_k)$ and $(z_k)\leftrightarrow(\kappa_k)$ are both triangular
and bijective. They measure the relative position of a coordinate knowing the previous ones.

\paragraph{The real case.} Finally, we discuss a result in the real case. 
%In this context the calculations are simpler. 
Consider now the following diagram
\begin{equation*}
    \begin{array}{ccc}
         M^{[0,1]}_N & \overset{\Sigma_N}{\longleftrightarrow} & \mathbb{B}_{N,2} \\
         \Big\updownarrow\ \text{{\scriptsize(i)}} &   & \Big\updownarrow\ \text{{\scriptsize(iii)}}\\
         (0,1)^N & \overset{\text{(ii)}}{\longleftrightarrow}  & (-1,1)^N
    \end{array}
\end{equation*}
where (i) is the canonical moments transformation, (ii) is the coordinatewise transformation $t\mapsto 2t-1$, (iii) is the inverse canonical coordinate transformation and $\Sigma_N$ is obtained by composition of these transformations.

Obviously, using Theorem~\ref{unifreal} and Proposition~\ref{proppropren}, the pushforward of the uniform probability measure on  $M^{[0,1]}_N$ by $\Sigma_N$ leads to the generalized Dirichlet Distribution on $\mathbb{B}_{N,2}$  on $\mathbb{B}_{N,2}$ with $a_1=a_2=\cdots=a_N = 1/2$ and $b_j = N-j+1$, $j=1,2,...,N$.
\bigskip

\end{document}